%
%
%
%
%

\documentclass[10pt]{article}
\usepackage{hyperref}
\usepackage[font=small,labelfont=bf]{caption}
\usepackage{import}
\usepackage{pdfpages}
\usepackage{transparent}
\usepackage{xcolor}
\usepackage{authblk}
\usepackage{subcaption}
\usepackage{tikz}
\usepackage{amsthm}
\usepackage[UKenglish]{babel}

%
\usepackage{amsmath,amsfonts,amssymb}
\usepackage{graphicx}
\usepackage{orcidlink}
\usepackage{subcaption}
\usepackage{tikz}
\usepackage{import}
\usepackage{xifthen}
\usepackage{pdfpages}
\usepackage{transparent}
\newcommand{%
    \def\svgwidth{1\columnwidth}
    \import{./figures/}{.pdf_tex}
}[2][1]{%
    \def\svgwidth{#1\columnwidth}
    \import{./figures/}{#2.pdf_tex}
}

\usepackage{bm}
\usepackage{csquotes}

\hypersetup{
    colorlinks,
    linkcolor={red!50!black},
    citecolor={blue!50!black},
    urlcolor={blue!80!black}
}
\newtheorem{Definition}{Definition}
\newtheorem{Remark}{Remark}
\newtheorem{Proposition}{Proposition}
\newtheorem{Corollary}{Corollary}
\newtheorem{Theorem}{Theorem}
\newtheorem{Lemma}{Lemma}

\DeclareMathOperator{\sgn}{sgn}

\DeclareMathOperator{\id}{Id}

\DeclareMathOperator{\fix}{Fix}

%
%
%
%



\newcommand{\cC}{\mathcal{C}}

\usepackage{microtype}

\usepackage{mathtools}

\mathtoolsset{showonlyrefs=true}

\title{Nonlinear Diffusion on Networks: Perturbations and Consensus Dynamics}
    
\date{}

\author[$\dagger$]{Riccardo Bonetto\orcidlink{0000-0001-8075-6147}}
\author[$\dagger$]{Hildeberto Jardón Kojakhmetov\orcidlink{0000-0001-8708-7409}}

\begin{document}
\affil[$\dagger$]{University of Groningen — Bernoulli Institute for Mathematics, Computer Science and Artificial Intelligence; Nijenborgh 9, 9747AG, Groningen, The Netherlands}
\maketitle




\begin{abstract}
    In this paper, we study a class of equations representing nonlinear diffusion on networks. A particular instance of our model could be seen as a network equivalent of the porous-medium equation. We are interested in studying perturbations of such a system and describing the consensus dynamics. The nonlinearity of the equations gives rise to potentially intricate structures of equilibria that could intersect the consensus space, creating singularities. For the unperturbed case, we characterize the sets of equilibria by exploiting the symmetries under group transformations of the nonlinear vector field. Under small perturbations,  we obtain a slow-fast system. Thus, we analyze the slow-fast dynamics near the singularities on the consensus space. The analysis at this stage is carried out for complete networks, allowing a detailed characterization of the system. We provide a linear approximation of the intersecting branches of equilibria at the singular points; as a consequence, we show that, generically, the singularities on the consensus space turn out to be \emph{transcritical}. We prove under local assumptions the existence of canard solutions. For generic graph structures, assuming more strict conditions on the perturbation, we prove the existence of a maximal canard, which coincides with the consensus subspace. In addition, we validate by numerical simulations the principal findings of our main theory, extending the study to non-complete graphs. Moreover, we show how the delayed loss of stability associated with the canards induces transient spatio-temporal patterns.  
\end{abstract}

\maketitle
\section{Introduction}

Diffusion is certainly among the most renowned subjects in dynamical systems. The ordinary differential equation $\dot{\bm{x}} = - L \bm{x}$, where $L$ is the graph Laplacian, describes a (linear) diffusion process on a network. In fact, the Laplacian matrix plays the role of a discrete counterpart of the
Laplacian differential operator, $\Delta$, present for example in the heat
equation, $\dot u = \Delta u$.
A lot of effort has been made to precisely analyze the behavior of Laplacian
systems, especially for the linear case
\cite{veerman2020primer,VEERMAN2019184,1470239}. More recently, nonlinear
extensions of the Laplacian dynamics have been explored
\cite{5531534,6329411,6859170,doi:10.1137/20M1376844,Jardon-Kojakhmetov2020}.

In the present paper, we aim to analyze a class of nonlinear Laplacian systems defined by
    \begin{equation}\label{eq:ALF_analogy}
        \dot{\bm{x}} = - L F(\bm{x}) + \epsilon H(\bm{x}, \Lambda) ,
    \end{equation}
    where $\bm{x}=\bm{x}(t)\in\mathbb{R}^n$, $F(\bm{x})$ is a nonlinear vector field, $0<\epsilon \ll 1$ is a small parameter, and  $H(\bm{x}, \Lambda)$ is a perturbation with $\Lambda\in\mathbb R^p$ an extra set of parameters; more precise definitions follow in the next sections. Equation \eqref{eq:ALF_analogy} describes a nonlinear diffusion process on a network. In the literature, such systems are also known as \emph{Absolute Laplacian Flows} (ALFs) \cite{5531534,6329411}.
Similarities with the continuum case can be found by looking at drift-diffusion
equations \cite{RevModPhys.15.1}, in particular with the porous-medium equation, $\dot u = \Delta
        u^m$, \cite{vazquez2007porous}. The porous-medium equation has several
    potential applications, for example: flow of gases \cite{leibenzon1930motion,
        muskat1938flow}, theoretical biology \cite{simpson2011models}, and
    particle-based models \cite{gurney1975regulation} (where the power considered
    is $m=2$). A network model for the porous-medium equation has been proposed in
    \cite{falco2022random}; similar systems have also been studied in
    \cite{carletti2020nonlinear} and \cite{rahimabadi2023extended}.
Let us moreover notice that by considering the rewriting $F(\bm{x}) =:
    K(\bm{x}) \bm{x}$ one can interpret the matrix $K(\bm{x})$ as a state-dependent
diffusion coefficient, leading, in turn, to another interpretation in terms of
the network structure. In fact, we can define a state-dependent Laplacian,
$L_{\textup{K}}(\bm{x}):=L K(\bm{x})$, which can be seen either as the
Laplacian of a directed graph under the definition of out-degree Laplacian
\cite{AHMADIZADEH2017281,doi:10.1063/1.5139137}, or as a particular case of the
Laplacian tensor for consensus models on hypergraphs \cite{Sahasrabuddhe_2021}.
Nonlinear diffusion processes have also been considered in discrete time
systems, in particular for lattice networks \cite{peng2023rich}.

Within the context of Laplacian dynamics the diffusion of `information' among
the agents can lead the system to reach a common state, referred to as
\emph{consensus}. As such, Laplacian systems find a remarkable application in
consensus problems \cite{1470239,1239709,9483355}, which have been extensively
studied for their importance in several branches of science, e.g.,
synchronization, opinion dynamics, coordination of robots, rendezvous problems,
among many others.

One significant challenge in studying networked dynamical systems is the high dimensionality of the problem. Furthermore, the nonlinearities inherent in such models contribute to an even greater degree of complexity. The contributions of this paper address such challenges in particular contexts where the model we consider is a nonlinear version of the well-known consensus dynamics. For complete graphs, under appropriate conditions accounting for the symmetries of the network system, we provide a reduction to a planar system, see Lemma \ref{lm:pert_assumption}. Hence, in this particular setup, a large dimensional nonlinear system can be studied with low-dimensional techniques of dynamical systems. For example, by exploiting the aforementioned reduction, we prove that, generically, the singularities along the consensus space are transcritical; see Proposition \ref{prop:tangents}. Later, for nondegenerate transcritical singularities, we provide conditions for the existence of ``canard solutions'' in Proposition \ref{prop:canard}. These canards are relevant in the context of perturbation problems since they are solutions that are not expected from zeroth-order approximations. Although the previous results are for complete graphs, we also notice that, under some conditions, the canard condition can be provided for more general graph structures; see Proposition \ref{prop:canard_generic}. Finally, the insights that we gain from our analysis are used in some numerical experiments showing some intriguing phenomena that we associate with the canards; see Section \ref{sec:spatio-temporal}.

The paper is structured as follows: In Section \ref{sec:def_and_pre}, we
define the (unperturbed) ALF and exploit some general properties. Section
\ref{sec:symmetries} is dedicated to understanding the role of symmetries in
ALFs; for a brief introduction to the main concepts related to symmetries see
Appendix \ref{app:sym}. In Section \ref{sec:perturbations},
we introduce a perturbation term leading to \eqref{eq:ALF_analogy}. As we will
see, the perturbations, in general, give rise to a slow-fast behavior of the
system. A brief summary of the main ideas of singular perturbation theory
useful for the main analysis can be found in Appendix \ref{app:singular}.
The slow-fast consensus dynamics of ALFs with complete graph structure is
studied in Section \ref{sec:consensus}, where we also provide
    conditions under which systems with arbitrary graph structure admit a maximal
    canard. In Section \ref{sec:ex_and_sim}, we explore numerically different
network structures, spatio-temporal configurations, and we describe the effects
of canard conditions in different setups. A brief discussion and the
conclusions are drawn in Section \ref{sec:conclusions}.


\section{Definitions and preliminaries}\label{sec:def_and_pre}

From a mathematical point of view, an undirected network structure can be described by a
graph, $\mathcal{G}=\{ \mathcal{V}, \mathcal{E} \}$, which consists of a finite
set of nodes (or vertices) $ \mathcal{V}=\{ 1,...,n \}$, and a set of edges
$e_{ij} \in \mathcal{E}$, where each $e_{ij}$ represents a connection between
the node $j$ and $i$ \cite{godsil2001algebraic}. Furthermore, if we assign a
set of weights $\mathcal{W}$ to the graph $\mathcal{G}$, then we obtain a
weighted graph, $\mathcal{G}_\textup{w}=\{ \mathcal{V}, \mathcal{E},
    \mathcal{W} \}$. We assume (unless otherwise stated) that the weights, $w_{ij}
    \in \mathcal{W}$, are positive reals. Such a choice is natural both for the
algebraic properties that follow from it \cite{beineke2004topics}, and for the
modeling perspective, since often the weights represent some measure of
distance or strengths of the coupling \cite{newman2018networks}. In other
contexts, negative weights are also considered \cite{doi:10.1137/17M1134172},
for example in models representing inhibition or antagonism; in this paper
we do not consider such situations. The edges and the weights are in one-to-one
correspondence, in fact, we assign to each edge $e_{ij}$ a weight $w_{ij}$. Throughout this paper, we consider only \emph{undirected} graphs. Therefore, we have that the weights are symmetric, that is $w_{ij} = w_{ji}$.

\begin{Definition}
    A \emph{simple graph} is an undirected graph without self-loops or multiple edges.
\end{Definition}
Unless otherwise stated, we consider simple graphs. For weighted graphs, we assume a simple unweighted structure, i.e., $\mathcal{G}_\textup{w} \setminus \mathcal{W}$ is a simple graph. Moreover, we make no nomenclature distinction between graphs and weighted graphs with unit weights, because the algebraic structures in such cases are exactly the same.

We now recall a few well-known concepts of graph theory
\cite{newman2018networks,biggs1993algebraic,godsil2001algebraic}. A \emph{path}
is a sequence of edges that joins a sequence of distinct vertices. A
\emph{connected component} of a graph is a sub-graph such that every pair of
nodes is connected by a path. A relevant class of graphs is given by the
complete graphs.

\begin{Definition}
    A \emph{complete graph} with $n$ nodes, denoted by $K_n$, is a graph such that for all pairs of nodes there is an edge connecting them, i.e., $\forall i,j \in \mathcal{V}$, with $i \neq j$, $\exists \ e_{ij} \in \mathcal{E}$.
\end{Definition}

Given a graph it is possible to define some algebraic structures on it. We
recall here the ones that are useful for the purpose of the paper.

\begin{itemize}
    \item The adjacency matrix, $A$, of a graph is the $n \times n$ matrix with
          components $A_{ij} = w_{ij}$ if $e_{ij} \in \mathcal{E}$, and $A_{ij} = 0$ if
          $e_{ij} \notin \mathcal{E}$.
    \item The degree matrix, $\Delta$, of a graph is the $n \times n$ diagonal matrix
          with (diagonal) components $\Delta_{ii} = \sum_j w_{ij}$.
    \item The Laplacian matrix, $L$, of a graph is the $n \times n$ matrix defined by $L
              := \Delta - A$.
\end{itemize}

For simple graphs, let us recall that the Laplacian matrix, $L$, is symmetric,
degenerate and semi-positive definite; moreover, the following proposition
holds \cite{beineke2004topics}.

\begin{Proposition}\label{prop:mult_and_vec}
    The algebraic multiplicity of the $0$ eigenvalue of $L$, $\mu_a(0)$, is equal to the number of connected components of the graph. Furthermore, the vector $\mathbf{1}=(1, ..., 1)^\intercal$ is an eigenvector of $L$ with eigenvalue $0$, i.e., $\mathbf{1} \in \textup{ker}(L)$.
\end{Proposition}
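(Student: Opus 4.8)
The plan is to prove the second assertion by direct computation and the first by reducing the algebraic multiplicity of the eigenvalue $0$ to the dimension of $\textup{ker}(L)$, which I then compute via the quadratic form associated with $L$.

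First I would check that $\mathbf{1}\in\textup{ker}(L)$: writing $L=\Delta-A$, the $i$-th component of $L\mathbf{1}$ is $\Delta_{ii}-\sum_j A_{ij}=\sum_j w_{ij}-\sum_j w_{ij}=0$, where both sums run over the $j$ with $e_{ij}\in\mathcal{E}$. Hence $L\mathbf{1}=\mathbf{0}$, so $0\in\spec(L)$ and $\mathbf{1}$ is an associated eigenvector. For the multiplicity statement, I would use that for a simple graph $L$ is real symmetric, hence diagonalisable, so that the algebraic multiplicity $\mu_a(0)$ equals the geometric multiplicity, i.e. $\mu_a(0)=\dim\textup{ker}(L)$; it therefore suffices to compute this dimension. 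The key identity is the edge expansion of the quadratic form,
\[
\bm{x}^\intercal L \bm{x}=\sum_{e_{ij}\in\mathcal{E}} w_{ij}\,(x_i-x_j)^2,
\]
the sum being over unordered edges, which in particular re-proves semi-positive definiteness. Since $L$ is semi-positive definite, $L\bm{x}=\mathbf{0}$ if and only if $\bm{x}^\intercal L\bm{x}=0$, and because all weights $w_{ij}$ are strictly positive this holds exactly when $x_i=x_j$ for every edge $e_{ij}\in\mathcal{E}$.

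Next I would translate this condition into connectedness: if two nodes lie in the same connected component they are joined by a path, and iterating $x_i=x_j$ along the path forces $\bm{x}$ to be constant on each connected component; conversely any such vector clearly satisfies $x_i = x_j$ on every edge. Thus $\textup{ker}(L)$ equals the space of vectors that are constant on each connected component. Finally, if $\mathcal{G}$ has connected components $\mathcal{C}_1,\dots,\mathcal{C}_k$, the indicator vectors $\mathbf{1}_{\mathcal{C}_1},\dots,\mathbf{1}_{\mathcal{C}_k}$ are linearly independent (their supports are nonempty and pairwise disjoint) and span precisely that space, so $\dim\textup{ker}(L)=k$. Combining the two steps gives $\mu_a(0)=k$, the number of connected components. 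An alternative to the last two steps is to permute the vertices so that $L$ becomes block diagonal with one block per component, each block being the Laplacian of a connected graph, and to show that each such block has one-dimensional kernel; this reduces to the same quadratic-form argument applied to a connected graph.

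The main obstacle, and the place to be careful, is the passage from $\bm{x}^\intercal L\bm{x}=0$ to ``$\bm{x}$ is constant on each connected component'': it uses both connectedness via paths and the strict positivity of the weights, and it is exactly the point where the standing assumptions (undirected graph, positive weights) are needed — for directed graphs or signed weights the conclusion can fail.
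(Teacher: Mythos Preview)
Your argument is correct and is in fact the standard proof of this classical result. Note that the paper does not give its own proof of this proposition: it simply states the result with a reference to the literature (Beineke--Wilson, \emph{Topics in Algebraic Graph Theory}), so there is no in-paper argument to compare against. Your write-up could therefore stand in as a self-contained proof; the only minor point worth tightening is the equivalence $L\bm{x}=\mathbf{0}\iff \bm{x}^\intercal L\bm{x}=0$, which you justify via semi-positive definiteness but which, strictly speaking, uses that a symmetric PSD matrix has a PSD square root (or, equivalently, an orthogonal eigendecomposition with nonnegative eigenvalues).
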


We now introduce the class of systems we are going to investigate in the
present paper. Let us assign to each node of a graph, $i \in \mathcal{V}$, a
state $x_i \in \mathbb{R}$ and a smooth function
    \begin{equation}\label{eq:response_function}
        \begin{aligned}
            f : \mathbb{R} & \to \mathbb{R}   \\
            x_i            & \mapsto f(x_i) ,
        \end{aligned}
    \end{equation}
    which we call the \emph{response function}. So, given the set of nodes $\mathcal{V}$, we obtain a state vector $\bm{x} =(x_1, ..., x_n)^\intercal$, and a response vector field $F(\bm{x})= \left(f(x_1), \dots, f(x_n) \right)^\intercal$. An Absolute Laplacian Flow \cite{5531534,6329411} is defined by the equation
\begin{equation}\label{eq:LDN}
    \dot{\bm{x}} = - L F(\bm{x}) .
\end{equation}
\begin{Remark}
    Due to Proposition \ref{prop:mult_and_vec}, the Laplacian decomposes into a direct sum $L = L^{(1)} \oplus \dots \oplus L^{(\mu_a(0))}$, and, therefore, an ALF decomposes in $\mu_a(0)$ independent systems. So, in general, we consider connected graphs.
\end{Remark}

Note that if we choose the response vector field to be the identity, i.e.,
$F(\bm{x}) = \bm{x}$, then Eq \eqref{eq:LDN} becomes the widely studied (linear)
Laplacian dynamics, $\dot{\bm{x}} = - L \bm{x}$,
\cite{veerman2020primer,VEERMAN2019184,1470239}. Indeed, the ALF is a direct
nonlinear generalization of the linear Laplacian dynamics
\cite{5531534,6329411,6859170,doi:10.1137/20M1376844}. ALFs have received
considerably less attention than the linear Laplacian dynamics, although they have strong relations with consensus problems. For example, ALFs have
been considered as models for nonlinear communications protocols in
\cite{8421087,NOSRATI20122262}.

An important observation is that, in contrast to linear Laplacian dynamics and depending on the response function, ALFs may exhibit bifurcations of the consensus state, see \eqref{eq:consensus}. Therefore, our objective is to characterize the dynamic behavior near consensus  for such systems under small perturbations.


\subsection{General properties}\label{sec:general_properties}

ALFs have some generic properties derived from the symmetric and
degenerate structure of the Laplacian matrix. First, notice that since
$\bm1\in\ker L$, then for any $h\in \cC^m(\mathbb{R}^n,\mathbb{R})$,
    $m\geq3$, one has that $L(F(\bm{x})+h(\bm{x})\bm{1})=LF(\bm{x})$, meaning that
\emph{ALFs are invariant under response field transformations $F(\bm{x})
        \mapsto F(\bm{x}) + h(\bm{x}) \mathbf{1}$}. As a consequence, we have that the
response vector field is not unique: in fact, there exists an infinite set of
vector fields leading to the same dynamics. This freedom can be used in
practice to choose the \enquote{simplest} response field. Such a property
reminds in some way of a gauge freedom \cite{landau2013classical}, where
one can choose among a class of response fields $F^{(h)}(\bm{x}) = F(\bm{x}) +
    h(\bm{x}) \mathbf{1}$ without affecting the evolution equation \eqref{eq:LDN}.
Another important property of ALFs is the existence of a constant of motion.

\begin{Lemma}\label{lm:constant}
        Given an ALF \eqref{eq:LDN}, there exists a constant of motion $k$,
        \begin{equation}\label{eq:constant}
            k = \langle \mathbf{1} , \bm{x} \rangle ,
        \end{equation}
        where $\langle \cdot  , \cdot \rangle$ is the usual Euclidean scalar product in $\mathbb{R}^n$.
    \end{Lemma}

\begin{proof}
    The time evolution of $k$ is given by $\dot{k}=\langle \mathbf{1} , \dot{\bm{x}} \rangle $. Then we have $\langle \mathbf{1} , \dot{\bm{x}} \rangle = -  \mathbf{1}^\intercal L F(\bm{x}) = -  (L \mathbf{1})^\intercal F(\bm{x}) = 0 $. Thus, $k$ is a constant (in time) value.
\end{proof}

It is well known that the constant of motion $k$ is related to the arithmetic mean of the states, $k=n \langle \bm{x} \rangle$, where $\langle \bm{x} \rangle := \langle \mathbf{1},\bm{x} \rangle /n$. Thus, we can interpret the presence of the constant of motion $k$ as a conservation law for $\langle \bm{x} \rangle$. More generally, every quantity of the form $c k$, with $c$ a constant, is a constant of motion for an ALF.

\begin{Lemma}
        The set of equilibria of an ALF \eqref{eq:LDN} is given by
        \begin{equation}\label{eq:equilibria_ldn}
            E := \{ \bm{x} \in \mathbb{R}^n \ | \ f(x_1)= \dots = f(x_n) \} .
        \end{equation}
    \end{Lemma}

\begin{proof}
    In order to have an equilibrium for \eqref{eq:LDN} we need  $F(\bm{x}) \in \ker(L)$, which means $f(x_1)= \dots = f(x_n)$, implying in turn that $\bm{x}$ is an equilibrium if and only if $\bm{x} \in E$.
\end{proof}

\begin{Corollary}\label{cor:consensus}
    The set
    \begin{equation}\label{eq:consensus}
        C := \{ \bm{x} \in \mathbb{R}^n \ | \ x_1 = \dots = x_n \}
    \end{equation}
    is a subset of equilibria of system \eqref{eq:LDN}, i.e., $C \subseteq E$.
\end{Corollary}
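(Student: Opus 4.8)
The plan is to read off the claim directly from the equilibrium characterisation \eqref{eq:equilibria_ldn} together with the definition of homogeneity. Since the ALF is homogeneous, all response functions coincide, say $f_1 = \dots = f_n =: f$. Thus the equilibrium set simplifies to $E = \{\bm{x} \in \mathbb{R}^n \ |\ f(x_1) = \dots = f(x_n)\}$.

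Next I would take an arbitrary $\bm{x} \in C$, so that $x_1 = \dots = x_n$. Applying the single function $f$ to these equal arguments gives $f(x_1) = \dots = f(x_n)$ trivially, hence $\bm{x} \in E$. Since $\bm{x} \in C$ was arbitrary, $C \subseteq E$. Equivalently, in the language used in the proof of the preceding proposition, for $\bm{x} \in C$ one has $F(\bm{x}) = f(x_1)\mathbf{1} \in \Span\{\mathbf{1}\} = \ker(L)$ by proposition \ref{prop:mult_and_vec}, so $-LF(\bm{x}) = 0$ and $\bm{x}$ is an equilibrium.

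There is no real obstacle here; the statement is essentially an immediate specialisation. The one point worth flagging explicitly is that homogeneity is exactly what makes the argument work: if the $f_i$ were allowed to differ, then $\bm{x} \in C$ would only yield $f_i(x_1)$ for each $i$, and these need not be equal, so $C \subseteq E$ would fail in general. It may also be worth remarking, for later use, that $C = \Span\{\mathbf{1}\}$ so this corollary says the consensus line always sits inside the (possibly larger and more intricate) equilibrium set of a homogeneous ALF.
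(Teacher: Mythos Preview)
Your proposal is correct and matches the paper's treatment: the paper states the corollary without proof precisely because it is the immediate specialisation of \eqref{eq:equilibria_ldn} under homogeneity that you spell out. Your additional observation that $F(\bm{x}) = f(x_1)\mathbf{1} \in \ker(L)$ for $\bm{x}\in C$ is also fine and in fact foreshadows how the paper later exploits $\mathbf{1}\in\ker(L)$.
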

The set $C$ is the so-called \emph{consensus} space \cite{Jardon-Kojakhmetov2020,1470239,1239709}.
As we already mentioned, one of our goals is to characterize the behavior near
the consensus space \eqref{eq:consensus}. Our characterization aims to be
qualitative, in the sense of \emph{topological equivalence}
\cite{guckenheimer1983nonlinear,wiggins2003introduction}. 
    \begin{Lemma}
        The consensus space can be parametrized by the constant k, i.e.,
        \begin{equation}
            C = \{ \bm{x} \in \mathbb{R}^n \ | \ x_1 = \dots = x_n = \frac{k}{n}, \ k \in \mathbb{R}\}.
        \end{equation}
    \end{Lemma}
    \begin{proof}
        Follows from straightforward computations.
    \end{proof}
    From the previous lemma, we denote a parametrized point in $C$ by $\bm{x}^*_k = (x_k^*, \dots, x_k^*)^\intercal$, where
    \begin{equation}\label{eq:equilibrium}
        x_k^*=\frac{k}{n}.
    \end{equation}
    Notice that, since the system has a constant of motion, the evolution is actually restricted to a family of $(n-1)$-dimensional hyperplanes
    \begin{equation}
        P_k := \{ x \in \mathbb{R}^n  \ | \  \langle \mathbf{1} , \bm{x} \rangle = k \} .
    \end{equation}
    It is known that for linear Laplacian dynamics, systems associated with weighted graphs are topologically equivalent to the corresponding unweighted ones \cite{doi:10.1137/130913973}. An important observation is that ALFs on weighted and unweighted graphs are equivalent near consensus on $P_k$.

\begin{Definition}
    Two vector fields, $X, Y$, are said to be \emph{topologically equivalent} if there exists a homeomorphism which takes orbits of $X$ to orbits of $Y$, preserving directions but not necessarily time parametrisation.
\end{Definition}

\begin{Proposition}\label{prop:equivalence}
     For any $f$ and $k$ fixed, in a small neighborhood of $\bm{x}^*_k \in C$ and restricted to $P_k$, the weighted, $\dot{\bm{x}}=-L_\textup{w}F(\bm{x})$, and the unweighted, $\dot{\bm{x}}=-LF(\bm{x})$, ALFs are topologically equivalent.
\end{Proposition}
\begin{proof}   
        Let $L_\textup{w}$ be the Laplacian of a weighted graph $\mathcal{G}_\textup{w}$. Then the Jacobian of the associated ALF restricted to the consensus space is given by $ - L_\textup{w} \textup{D}F(\bm{x}^*_k) $, where $\textup{D}F(\bm{x}^*_k)$ is the Jacobian of $F(\bm{x})$ evaluated on the consensus set parametrized by $k$. On the other hand, by setting all the weights to one on $\mathcal{G}_\textup{w}$ we obtain the simple graph $\mathcal{G}$. Thus, for the ALF with unweighted graph $\mathcal{G}$ the Jacobian is $- L \textup{D}F(\bm{x}^*_k)$, where $L$ is the Laplacian matrix associated to $\mathcal{G}$. We notice that $ \textup{D}F(\bm{x}^*_k) = \id \textup{d}_xf(x^*_k)$, where $\textup{d}_x f$ is a short-hand notation for the derivative of $f$ with respect to $x$, and $\id$ is the $n\times n$ identity matrix. So, the two matrices to compare are $- \textup{d}_xf(x^*_k) L_\textup{w}$  and $-  \textup{d}_xf(x^*_k) L$. We remark that $- \textup{d}_xf(x^*_k)$ is a common scalar factor and that the matrices $L_\textup{w}$ and $L$ do not depend on $x^*_k$\\
        Let us recall that the Laplacian matrix is a semi-positive definite matrix
        \cite{beineke2004topics}, which means that its eigenvalues are all greater than
        or equal to zero. Such a property holds for both simple graphs and
        positive-weighted graphs. Moreover, since in our context $L_\textup{w}$ and $L$
        are symmetric, we can transform (by a similarity transformation) the Jacobians
        to the diagonal forms
        \begin{equation}
            -  \textup{d}_xf(x^*) \text{diag}(0, \mu_1, \dots, \mu_{n-1}) \quad \text{and} \quad -  \textup{d}_xf(x^*) \text{diag}(0,\lambda_1, \dots, \lambda_{n-1}) ,
        \end{equation}
        where $\{0, \mu_1, \dots, \mu_{n-1}\}$ are the eigenvalues of $L_\textup{w}$, and $\{0,\lambda_1, \dots, \lambda_{n-1}\}$ are the eigenvalues of $L$; in both cases the non-zero eigenvalues are positive. Moreover, since the $0$ eigenvalue is associated to the one-dimensional set of equilibria $C$ (Proposition \ref{prop:mult_and_vec} and Corollary \ref{cor:consensus}), we can reduce the analysis to the hyperplane $P_k$, which leads to the $(n-1)$-dimensional diagonal matrices
        \begin{equation}\label{eq:two_systems}
            -  \textup{d}_xf(x^*_k) \text{diag}( \mu_1, \dots, \mu_{n-1}) \quad \text{and} \quad -  \textup{d}_xf(x^*_k) \text{diag}(\lambda_1, \dots, \lambda_{n-1}) ,
        \end{equation}
        where $x^*_k$ is now a parameter. Whenever the common term $\textup{d}_xf(x^*_k)$ is nonzero, we have that both matrices have eigenvalues with the same sign. Therefore, topological equivalence follows from the Hartman-Grobman Theorem. 
        Actually, it is possible to explicitly compute the homeomorphism between the two systems described by Eq \eqref{eq:two_systems}. Since the matrices are diagonal, effectively, we compute $n-1$ homeomorphisms between one-dimensional systems. The flows of the $i$-th components are respectively
        \begin{equation}
           x_{\mu_i}(t, x_0):= x_0 \exp{\left(-  \textup{d}_xf(x^*_k) \mu_i t\right)}  \quad \text{and} \quad  x_{\lambda_i}(t, x_0):= x_0 \exp{\left(-  \textup{d}_xf(x^*_k) \lambda_i t\right)}.
        \end{equation}
        Thus, the homeomorphism, $z:\mathbb{R} \to \mathbb{R}$, such that $z(  x_{\mu_i}(t, x_0) ) = x_{\lambda_i}(t, z(x_0) )$,  is given by
        \begin{equation}\label{eq:homeo}
            z(x) = \begin{cases}
                x^\frac{\lambda_i}{\mu_i} &\quad \text{if} \quad x \geq 0  ,\\
                - |x|^\frac{\lambda_i}{\mu_i} &\quad \text{if} \quad x < 0 .
            \end{cases}
        \end{equation}
        It is straightforward to check that Eq \eqref{eq:homeo} is a homeomorphism of the real line, as it is also proven in \cite{hirsch2013differential}. Notice that the transformation \eqref{eq:homeo} does not depend on the common factor of the systems \eqref{eq:two_systems}, and, therefore, it holds also when $-  \textup{d}_xf(x^*_k) = 0$.
\end{proof}

\begin{Remark}
    Having in mind Proposition \ref{prop:equivalence}, from now on we assume that ALFs have all weights equal to one, i.e., they are defined by a simple (unweighted) graph $\mathcal{G}$.
\end{Remark}

Notice that the topological equivalence described by Proposition \ref{prop:equivalence}, defines a local equivalence close to the consensus space. However, in what follows the assumption of a simple graph structure is global. This is motivated by the later analysis that is dedicated to the consensus space, which is the focus of the paper. The reader should keep in mind that away of the consensus space the equivalence is not proven, and, therefore, does not apply in general.
The results we stated until this point rely mainly on the algebraic properties
of the Laplacian. For simple graphs, together with Proposition
    \ref{prop:equivalence}, it is particularly useful to exploit the symmetry
properties induced by the graph, which in turn can provide insightful
information for the ALF.


\section{Symmetries}\label{sec:symmetries}
In the following sections, we are going to look at how the symmetry
properties of the response function and of the graph affect the
equilibria and the dynamics of ALFs. Some fundamental notions of group theory
and its applications are contained in Appendix \ref{app:sym}.


\subsection{Equilibria of ALFs}\label{sec:equilibria_homogeneous}

Corollary \ref{cor:consensus} shows that the consensus space is always a subset of equilibria for \eqref{eq:LDN}. Now, we show that, starting from the consensus space, the group transformations leaving the response function invariant induce new equilibria.

\begin{Proposition}\label{prop:invariance}
    Let $f$ be a $\Gamma$-invariant response function. Then, the group $\Gamma_n := \underbrace{\Gamma\times\cdots\times\Gamma}_{n\textnormal{-times}}$ maps the equilibria coinciding with the consensus space, $C$, to equilibria, i.e.,
    \begin{equation}
        \Gamma_n (C) \subseteq E.
    \end{equation}
\end{Proposition}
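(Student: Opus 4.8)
The plan is to unwind all the definitions; this statement is essentially a bookkeeping exercise. Fix an arbitrary point $\bm{x}^* \in C$ and an arbitrary element $g \in \Gamma_n$, and show directly that $g\bm{x}^*$ satisfies the equilibrium condition \eqref{eq:equilibria_ldn}.

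First I would make explicit the action of $\Gamma_n = \Gamma \times \cdots \times \Gamma$ on $\mathbb{R}^n$, since the paper suppresses the representation symbol $\psi$. The relevant (and only natural) choice is the block-diagonal representation induced by the representation of a single factor $\Gamma$ on $\mathbb{R}$: an element $g = (\gamma^{(1)}, \dots, \gamma^{(n)}) \in \Gamma_n$ acts by $g\bm{x} = (\gamma^{(1)} x_1, \dots, \gamma^{(n)} x_n)$. I would state this explicitly because it is precisely what makes the componentwise cancellation below go through.

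Then the computation is one line. Since $\bm{x}^* \in C$ we have $x_1^* = \cdots = x_n^* =: x^*$, so the $i$-th component of $g\bm{x}^*$ is $\gamma^{(i)} x^*$. Applying the (homogeneous) response $f$ and using $\Gamma$-invariance, $f\bigl(\gamma^{(i)} x^*\bigr) = (f \circ \gamma^{(i)})(x^*) = f(x^*)$ for every $i = 1, \dots, n$. Hence $f$ takes the common value $f(x^*)$ on all components of $g\bm{x}^*$, which by \eqref{eq:equilibria_ldn} means $g\bm{x}^* \in E$. As $\bm{x}^*$ and $g$ were arbitrary, $\Gamma_n(C) \subseteq E$.

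The only point requiring a little care — and the only thing I would call an obstacle, a minor one — is being precise that $\Gamma_n$ acts diagonally block-by-block through the single-factor action of $\Gamma$ on $\mathbb{R}$, rather than through some other representation on $\mathbb{R}^n$; under that action the hypothesis $f \circ \gamma = f$ applies verbatim to each coordinate. I would also note that homogeneity ($f_1 = \cdots = f_n = f$) is what is being used, and remark that the same argument, applied coordinatewise with each $f_i$ $\Gamma$-invariant, yields the analogous statement for the clustered equilibria mentioned after Corollary \ref{cor:consensus}.
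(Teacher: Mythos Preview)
Your proof is correct and follows essentially the same route as the paper: both arguments amount to the observation that $F(g\bm{x}^*) = F(\bm{x}^*)$ componentwise by $\Gamma$-invariance and homogeneity, whence $g\bm{x}^*$ is an equilibrium. Your version is simply more explicit about the diagonal action of $\Gamma_n$ and appeals directly to the characterisation \eqref{eq:equilibria_ldn} of $E$, whereas the paper phrases the same step as $-LF(\Gamma_n\bm{x}^*) = -LF(\bm{x}^*) = 0$.
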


    \begin{proof}
        Let us consider $-L F( \Gamma_n \bm{x}^*)$, where $\bm{x}^* \in C$. We recall that by definition every component of the vector field $F$ has the same response function $f$. Also, each response function is invariant under $\Gamma$, i.e., $f(\Gamma x) = f(x)$. So we have that $F( \Gamma_n \bm{x}^*) = (f(\Gamma x_1^*), \dots, f(\Gamma x_n^*))^\intercal =(f(x_1^*), \dots, f( x_n^*))^\intercal =  F( \bm{x}^*)$. Therefore $-L F( \Gamma_n \bm{x}^*)=-L F( \bm{x}^*) = 0$.
    \end{proof}

\begin{Remark}
    From Proposition \ref{prop:invariance} it follows that any subgroup of $\Gamma_n$ maps  $C$ to equilibria.
\end{Remark}

The invariance of the response function leads to a richer structure of the
equilibria, similarly to what happens for equivariant bifurcations
\cite{golubitsky2012singularities,chossat2000methods}. Such a richer structure
can give rise, for example, to \emph{bipartite consensus}
\cite{6858991,6329411,7917352} where the agents converge to two separate
clusters of consensus. A simple exemplification of the appearance of
bipartite consensus in ALFs is given by considering the response function $f(x)
    =x^2$. It is clear that the symmetry group for such a function is
$\mathbb{Z}_2$, in particular we are considering the representation $\{ 1,-1
    \}$. From Proposition \ref{prop:invariance} we have that the set of
equilibria consists of all the possible combinations $\pm x_1 = \pm x_2 = \dots
    = \pm x_n$, and, therefore, we have bipartite consensus among the potential
equilibria of the system. Let us notice that the linear set-up for
bipartite consensus requires negative weights on the network structure in order
to obtain such a result, while for ALFs bipartite consensus follows from
nonlinearity, and symmetry properties of the response function. It is
interesting to notice that the lines of bipartite consensus, $\pm x_1 = \pm x_2
    = \dots = \pm x_n$, arising from generic even response functions are saddles;
this property follows from straightforward computations of the Jacobian.


\subsection{Equivariant ALFs}\label{sec:equi_ldn}

Since we are considering systems with a network structure, the symmetry group
associated with the equivariance properties, i.e., the automorphism group of the graph, denoted by $\textup{Aut} (\mathcal{G})$, is a subgroup of the symmetric group over $n$ elements, denoted by $\mathfrak{S}_n$. Moreover, if an ALF is $ \mathfrak{S}_n \supseteq \Gamma$-equivariant, it
follows that it is also equivariant under any subgroup $\Sigma \subset \Gamma$.
Therefore, $\fix(\Sigma)$ is also a flow-invariant set, and
$\fix(\Gamma)\subseteq \fix(\Sigma)$. From these arguments, if we are
considering a $\Gamma$-equivariant ALF, the analysis of the dynamics can be
reduced to the fixed-point spaces $\fix(\Sigma)$, for all subgroups $\Sigma$ of $\Gamma$.
Considering that the phase space of an ALF is $\mathbb{R}^n$, when acting on a state, $\bm{x}$, we use the \emph{standard/permutation representation} of $\mathfrak{S}_n$ in $\mathbb{R}^n$. An element of the standard representation is  called a permutation matrix.

\begin{Definition}
    A \emph{permutation matrix} $\sigma \in \mathfrak{S}_n$ is an $n \times n$ matrix with components
    \begin{equation}
        \sigma_{ij} =
        \begin{cases}
            1 & \quad \text{if $\sigma(i)=j$}, \\
            0 & \quad\text{otherwise} ,              \\
        \end{cases}
    \end{equation}
    where the expression $\sigma(i)=j$ is the abstract action of the group element $\sigma$ on an element of the vertex set $\mathcal{V}=\{1, \dots, n\}$, meaning that the component $\sigma_{ij}$ is $1$ if the vertex $i$ is permuted with the vertex $j$, and is zero otherwise. 
\end{Definition}
From now on, if not otherwise explicitly mentioned, when we write the action of a permutation group, we implicitly consider the standard representation.

\begin{Proposition}\label{prop:Sn_equivariance}
        $\mathcal{G}$-ALFs are $\textup{Aut}(\mathcal{G})$-equivariant, where $\textup{Aut}(\mathcal{G})$ is a subgroup of $\mathfrak{S}_n$ in the standard representation.
    \end{Proposition}
    \begin{proof}
        Let $\sigma \in \textup{Aut}(\mathcal{G})$ be an $n \times n$ permutation matrix. We start by applying a permutation to the vector field, $\sigma L F(\bm{x})$. Let us notice that $ [L, \sigma] = 0$ for all $\sigma \in \textup{Aut}(\mathcal{G})$, where $[ \cdot , \cdot ]$ is the commutator, as it follows from the definition of symmetry for a graph (Definition \ref{def:aut_g}, Appendix \ref{app:sym}). Thus, we have that $\sigma L F(\bm{x})= L \sigma F(\bm{x})$. Notice that a permutation applied to the vector field $F(\bm{x}) = (f(x_1), \dots, f(x_n))^\intercal$ exchanges the order of the components, but since the functions are indistinguishable we have that $\sigma F(\bm{x}) = F(\sigma \bm{x})$. Therefore, $\sigma L F(\bm{x}) = L F(\sigma \bm{x})$, which proves the statement.
    \end{proof}

\begin{Remark}
    Note that the action of $ \textup{Aut}(\mathcal{G})\subseteq\mathfrak{S}_n$ on the (nonlinear) vector field $F(\bm{x})$ is linear.
\end{Remark}

\begin{Proposition}\label{prop:con_sym}
    Let $\textup{Aut}(\mathcal{G})$ be an irreducible subgroup of $\mathfrak{S}_n$ in the standard representation. If $\textup{Ord}( \textup{Aut}(\mathcal{G})) \geq n$ then $\fix(\textup{Aut}(\mathcal{G})) = C$.
\end{Proposition}

\begin{proof}
    Let $\textup{Ord}( \textup{Aut}(\mathcal{G})) =k \geq n$. The fixed-point space is obtained by considering the equations
        \begin{equation}\label{eq:eq_fix}
            \sigma^{(1)} \bm{x} = \dots =\sigma^{(k)} \bm{x} ,
        \end{equation}
        where $\sigma^{(s)} \in \textup{Aut}(\mathcal{G})$, $s=1, \dots, k$. Suppose, by contradiction, that Eq \eqref{eq:eq_fix} does not fix all the components to the same value, but $m<n$ components are fixed to one common value and the other $n-m$ components are fixed to a different common value, i.e., equation \eqref{eq:eq_fix} leads to
        \begin{align}
            x_1     & =\dots =x_m ,                \\
            x_{m+1} & = \dots = x_n .
        \end{align}
        As a consequence, we have that the permutation matrices $\sigma^{(s)}$ can be written as follows
        \begin{equation}
            \begin{pmatrix}
                \sigma^{(s)}_{11} & \dots  & \sigma^{(s)}_{1m} & 0                          & \dots  & 0                      \\
                \vdots            & \ddots & \vdots            & \vdots                     & \ddots & \vdots                 \\
                \sigma^{(s)}_{m1} & \dots  & \sigma^{(s)}_{mm} & 0                          & \dots  & 0                      \\
                0                 & \dots  & 0                 & \sigma^{(s)}_{(m+1) (m+1)} & \dots  & \sigma^{(s)}_{(m+1) n} \\
                \vdots            & \ddots & \vdots            & \vdots                     & \ddots & \vdots                 \\
                0                 & \dots  & 0                 & \sigma^{(s)}_{n (m+1)}     & \dots  & \sigma^{(s)}_{n n}
            \end{pmatrix} ,
        \end{equation}
        which is a block form. This would imply that the representation is decomposable and therefore not irreducible, which is a contradiction. The case where the components are fixed to a number of {distinct values} $l>2$ is analogous, and would give rise to block matrices with $l$ blocks. {In turn, such block matrices would be associated to a decomposable representation, leading again to a contradiction.}
\end{proof}

We have established a link between ALFs and equivariant systems. Let us
notice that, if we have a dynamical system equivariant under a finite group,
such dynamical system can be seen as a \emph{cell-network} \cite{wrap181}.
A cell-network is a system of differential equations symmetric under a
group(oid) associated to a network that could have directed and multiple
    edges. So, there is a strict correspondence between cell-networks and the ALF
framework. For example, a $K_n$-ALF is a cell-network with complete graph
structure, $K_n$. This alternative point of view provides an extra tool in the
analysis of ALFs. Indeed, from Proposition \ref{prop:invariant_space}
it follows that $\fix(\textup{Aut}(\mathcal{G}))$ is flow-invariant,
and so, in turn, the consensus space is an invariant space for a $K_n$-ALF, or
in the language of cell-networks it is a pattern of synchrony, which is already
known from Corollary \ref{cor:consensus}. Actually, Corollary
\ref{cor:consensus} not only holds for all ALFs, but also tells us that the
consensus space consists of equilibria of the system, and is not
just an invariant set. However, Proposition \ref{prop:con_sym} does not
rely on the algebraic structure of the Laplacian, it is a general result for a
class of equivariant systems. Let us notice that the results following from
equivariance are, in general, more robust than the algebraic ones. Therefore,
the symmetry perspective could be very useful when some transformation is
applied to the system.

Let us notice that, when the order of $\textup{Aut}(\mathcal{G})$ is less than
$n$, we have $\dim \fix( \textup{Aut}(\mathcal{G}))>1$, so the fixed-point space
is related to clustering, and $C \subset \fix( \textup{Aut}(\mathcal{G}))$; we
illustrate such a case in the following example. We consider the path graph $P_n$ with $n> 2$, which has $\mathbb{Z}_2$ as
group of symmetry and $\textup{Ord}(\mathbb{Z}_2) = 2$. We can see that the equalities imposed by $\mathbb{Z}_2 \bm{x} = \bm{x}$ are
\begin{align*}
    x_1     & = x_n     \\
    x_2     & = x_{n-1} \\
    \vdots  &           \\
    x_{n-1} & = x_2     \\
    x_n     & =x_1 ,
\end{align*}
which do not lead to consensus in general, but to clustering $\{ \bm{x} \in \mathbb{R}^n \ | \ x_1 = x_n, \dots, x_n=x_1 \}$.

In summary, in this section we outlined a different perspective on the
consensus space. Let us recall that a weighted ALF, near the consensus
space, is equivalent to an ALF with simple graph structure (Proposition
\ref{prop:equivalence}). Thus, the weighted ALF acquires the property of
equivariance under the automorphism group of the (simple) graph; the symmetry
group is a subgroup of the symmetric group. Therefore, an ALF possesses also a
cell-network structure. At this point, forgetting momentarily the specific
properties of ALFs, we can study the connections between the consensus space
and the symmetry properties. It turns out that the consensus space arises as an
invariant space for a class of subgroups of $\mathfrak{S}_n$ (Proposition
\ref{prop:con_sym}).

We conclude this section by recalling a theorem highlighting the fundamental role
of the graph structure in the definition of a dynamic network. Although, as we
have seen, graphs and groups are intimately related, if we consider the logic
flow of definitions for dynamic networks the graph structure should be defined
first.

\begin{Theorem}[Frucht \cite{CM_1939__6__239_0}]\label{thm:frucht}
    For any finite group $\Gamma$, there exists a finite graph $\mathcal{G}$ such that $\textup{Aut}(\mathcal{G}) 	\cong \Gamma$.
\end{Theorem}

Theorem \ref{thm:frucht} ensures that for any given finite group we can
construct a graph which is invariant under the group. However, uniqueness is
not provided, au contraire, in general there are infinitely many graphs with
such a property \cite{biggs1993algebraic}. For this reason, the definition of
the graph structure of a dynamic network precedes the group properties.


\section{Perturbations}\label{sec:perturbations}

A perturbation is a small `change' of the differential equations governing the
system. The order of magnitude can be quantified by a parameter $\epsilon$, $0<
    \epsilon \ll 1$. A perturbation is called \emph{singular} if the solutions of
the perturbed ODEs have a different qualitative behavior with respect to the
solutions of the unperturbed system \cite{wechselberger2020geometric}. On the
other hand, \emph{regular} perturbations are the ones that preserve the
qualitative structure. For this reason, singular perturbations are the most
interesting if we aim to characterize a system qualitatively. In Appendix
\ref{app:singular}, we briefly review some of the standard terminology of
singular perturbations.


\subsection{Perturbed ALFs}
As mentioned in the introduction, we now consider perturbations of ALFs
given by
\begin{equation}\label{eq:perturbed_LDN}
    \dot{\bm{x}} = - L F(\bm{x}) + \epsilon H(\bm{x},\Lambda) ,
\end{equation}
where $0< \epsilon \ll 1$, $H(\bm{x},\Lambda) = (h_1(\bm{x},\Lambda), \dots, h_n(\bm{x},\Lambda))^\intercal$, and $\Lambda$ is a set of parameters.
The consensus space $C$ is always a set of equilibrium points for
$\epsilon=0$ (Corollary \ref{cor:consensus}). Moreover, $C$
is one-dimensional and hence generic perturbations are singular
\cite{wechselberger2020geometric}. The system as written in
\eqref{eq:perturbed_LDN} is in non-standard form. In many cases, it is useful
to transform a singularly perturbed system into a standard form, one reason
among all is that a large amount of results in singular perturbation theory are
stated for standard forms.

    \begin{Proposition}
        Let $l \in \mathcal{V}$, and $\tilde{\bm{x}}=(\tilde{x}_1, \dots, \tilde{x}_{n-1})^\intercal$ be an $(n-1)$-dimensional vector. Let $\tilde{L}$ be the $(n-1)\times(n-1)$ matrix obtained from the Laplacian $L$ by removing the $l$-th column and row, i.e.,
        \begin{equation}
            \tilde{L} :=
            \begin{pmatrix}
                L_{11}     & \dots  & L_{1 (l-1)}     & L_{1 (l+1)}     & \dots  & L_{1n}     \\
                \vdots     & \ddots & \vdots          & \vdots          & \ddots & \vdots     \\
                L_{(l-1)1} & \dots  & L_{(l-1) (l-1)} & L_{(l-1) (l+1)} & \dots  & L_{(l-1)n} \\
                L_{(l+1)1} & \dots  & L_{(l+1) (l-1)} & L_{(l+1) (l+1)} & \dots  & L_{(l+1)n} \\
                \vdots     & \ddots & \vdots          & \vdots          & \ddots & \vdots     \\
                L_{n1}     & \dots  & L_{n (l-1)}     & L_{n (l+1)}     & \dots  & L_{nn}
            \end{pmatrix},
        \end{equation}
        and let $v^{(l)}$ be the $(n-1)$-dimensional vector defined as follows
        \begin{equation}
            v^{(l)}:=(L_{1l}, \dots, L_{(l-1) l}, L_{(l+1) l}, \dots, L_{nl})^\intercal .
        \end{equation}
        The coordinate transformation
        \begin{equation}\label{eq:reduction}
            \begin{aligned}
                x_1 & \mapsto \tilde{x}_1 ,                                                      \\
                    & \vdots                                                                     \\
                x_l & \mapsto  x_l\left(k, \tilde{\bm{x}} \right):= k -  \sum_{j} \tilde{x}_j  , \\
                    & \vdots                                                                     \\
                x_n & \mapsto \tilde{x}_{n-1} ,
            \end{aligned}
        \end{equation}
        where $k$ is the constant of motion \eqref{eq:constant}, transforms system \eqref{eq:perturbed_LDN} into the standard form
        \begin{equation}\label{eq:perturbed_LDN_standard}
            \begin{aligned}
                \dot{\tilde{x}}_i & = - \sum_{j } \tilde{L}_{ij} f(\tilde{x}_j) - v^{(l)}_i  f\left(  x_l\left(k, \tilde{\bm{x}} \right)  \right) +
                \epsilon g_i (\tilde{\bm{x}}, k, \Lambda)                                                                                           \\
                \dot k            & = \epsilon \sum_j g_j (\tilde{\bm{x}}, k, \Lambda) ,
            \end{aligned}
        \end{equation}
        where $g_i (\tilde{\bm{x}}, k, \Lambda) := h_i \left(\tilde{x}_1, \dots,  x_l \left(k, \tilde{\bm{x}}\right), \dots, \tilde{x}_n ,\Lambda \right)$.
    \end{Proposition}

\begin{proof}
        Substituting the coordinate change Eq \eqref{eq:reduction} into Eq \eqref{eq:perturbed_LDN} we obtain the set of equations for $\tilde{x}_i$. Then, the equation for $k$ is retrieved by evaluating the scalar product $\langle \mathbf{1}, \dot{\bm{x}} \rangle$.   
    \end{proof}

Let us notice that the perturbation transforms the constant of motion into a
slow variable of the system. Since the constant $k$ is related to the
arithmetic mean of the states, $\langle \bm{x} \rangle$, perturbing the system,
in general, means that the states' density is changing, e.g., the concentration
of the system is increasing/decreasing. In other words, the perturbation acts
as a drift/dissipation in the undergoing diffusive evolution.
Notice that if the perturbation $H$ is `perpendicular' to the consensus space,
$\langle \mathbf{1} , H(\bm{x},\Lambda) \rangle = 0$, then $\dot k = 0$, which
in turn means that the perturbation problem is regular.

As mentioned, we are interested in singular perturbations for two major reasons. First of all, in the setting of our problem where the unperturbed system has a set of equilibria of dimension greater than zero, generic perturbations are singular. The second reason is that singular perturbations are the ones that could lead to a different qualitative behavior
of the system. In the following section, we look at the near consensus
behavior under a singular perturbation, and we characterize the dynamics for a
selected class of systems.



\section{Consensus dynamics}\label{sec:consensus}

In this section, we further specialize our analysis to complete networks, $K_n$. We will show how this assumption allows a reduction of the equations, and a detailed description of the consensus behavior will be given.


\subsection{The layer problem}

Let us consider a perturbed $K_n$-ALF in the standard form
\eqref{eq:perturbed_LDN_standard}. The layer problem, obtained by setting
$\epsilon$ to zero, reads as
\begin{equation}\label{eq:red_LDN}
        \dot{\tilde{x}}_i = - \left[ \sum_{j} \tilde{L}_{ij} f(\tilde{x}_j) + v^{(l)}_i f\left( x_l \left(k, \tilde{\bm{x}} \right) \right) \right] .
\end{equation}
The system described by \eqref{eq:red_LDN} is $(n-1)$-dimensional with one
parameter, $k$.
It is worth noting that although the graph structure in \eqref{eq:red_LDN}
seems to be lost, some information is readily available. On the one hand,
the term $\sum_{j} \tilde{L}_{ij}f(\tilde{x}_j)$ corresponds to a
$K_{(n-1)}$-ALF, i.e., a complete graph with $(n-1)$ nodes. On the other hand,
the scalar term $ v^{(l)}_i f\left( x_l \left(k, \tilde{\bm{x}} \right)
        \right)$, can be interpreted as a higher-order interaction
\cite{bick2021higher} naturally arising from the reduction performed.

Furthermore, and for our specific purposes, there is a more relevant property
that survives the transformation: The symmetry. In fact, starting with an
$\mathfrak{S}_n$-equivariant unperturbed system, after the transformation into
the standard form we get a layer problem which is
$\mathfrak{S}_{n-1}$-equivariant. This is a first example of robustness of the
symmetries. Since we are interested in the consensus dynamics, we consider
equation \eqref{eq:red_LDN} on the invariant space given by
$\fix(\mathfrak{S}_{n-1})$. Let us notice that, in general, the invariant space
$\fix(\mathfrak{S}_{n-1})$ is not globally stable. For example, in a
neighborhood of an attracting branch of the consensus space it is stable.
Instead, in a neighborhood of a repelling part of the consensus space it is
unstable. This means that the restriction to $\fix(\mathfrak{S}_{n-1})$
provides a robust description of the system only in the basin of attraction of
a stable region of $\fix(\mathfrak{S}_{n-1})$; in the unstable regions the
restriction works only if the initial conditions lie exactly in the invariant
space and if the perturbation preserves $\fix(\mathfrak{S}_{n-1})$. Then,
restricting equation \eqref{eq:red_LDN} to $\fix(\mathfrak{S}_{n-1})
        = \{ \tilde{\bm{x}} \in \mathbb{R}^{n-1} \ | \ \tilde{x}_1= \dots
        =\tilde{x}_{n-1} \}$ we obtain
\begin{equation}\label{eq:one_dim}
    \dot x  = - \left( f(x) - f(k - (n-1) x) \right) ,
\end{equation}
where $x:=\tilde{x}_1= \dots =\tilde{x}_{n-1}$, and we used the properties of the Laplacian of a complete graph. So, in the invariant space the system reduces to a one dimensional system (with one parameter). The former consensus space, $C=\fix(\mathfrak{S}_n)$, gives the equilibria of \eqref{eq:one_dim}. This is a consequence of the fact that the layer problem is a coordinate transformation of the unperturbed non-standard form Eq \eqref{eq:LDN}, for which we know the consensus space is a set of equilibria.

\begin{Remark}
        The parametrized consensus states \eqref{eq:equilibrium} given by $x^*_k = k/n$ are equilibria of \eqref{eq:one_dim}.
    \end{Remark}

Since the equilibria of the layer equation constitute the critical manifold for
the slow-fast system, then the consensus space is a (part of the) critical
manifold of the system.

\begin{Proposition}\label{prop:conditions}
    The stability of the consensus set \eqref{eq:equilibrium} is determined by the derivative of the response function. In particular, we can decompose the consensus into three components,
    \begin{equation}
        \begin{aligned}
            C^a=\left\{\bm{x}^*_k \in C \ | \  \textup{d}_xf(x^*_k) >0 \right\}   ,     \\
            C^r=  \left\{\bm{x}^*_k \in C \ | \   \textup{d}_xf(x^*_k) <0 \right\}    , \\
            C^s= \left\{\bm{x}^*_k \in C \ | \  \textup{d}_xf(x^*_k) =0 \right\}     ,
        \end{aligned}
    \end{equation}
    where $C^a$ is attracting (stable), $C^r$ is repelling (unstable), and $C^s$ is singular.
\end{Proposition}

\begin{proof}
    By straightforward computations we obtain the Jacobian of the system evaluated at the consensus, $ - \textup{d}_xf\left(k/n\right) n$, which implies the proposition.
\end{proof}

For $0<\epsilon\ll1$, thanks to Theorem \ref{thm:fenichel} (Fenichel), Appendix \ref{app:singular},  compact {submanifolds} of $C^{a/r}$ perturb to $O(\epsilon)$-close locally invariant slow manifolds of the singularly perturbed system
\eqref{eq:perturbed_LDN_standard} (denoted correspondingly by $C^{a/r}_\epsilon$).
For the singularities $C^s$ we need further
analyses. We are going to consider the case of \emph{transcritical}
singularities which, as we will show, turn out to be generic due to the
geometric properties of the problem.


\subsection{The slow-fast dynamics around the consensus space}\label{sec:sf_dynamics}

We start by considering a symmetry condition on the perturbation. While restrictive, we impose it to guarantee the invariance of $\fix(\mathfrak{S}_{n-1})$ for the perturbed system \eqref{eq:perturbed_LDN_standard}.

\begin{Lemma}\label{lm:pert_assumption}
    Let $H(\bm{x},\Lambda)$ be a perturbation such that $h_i = h$, $\forall i \neq l$, and $h_l = \tilde{h}$. Then, $\fix(\mathfrak{S}_{n-1})$ is an invariant space for the perturbed system \eqref{eq:perturbed_LDN_standard}.
\end{Lemma}

    \begin{proof}
        Let us consider the equations in standard form \eqref{eq:perturbed_LDN_standard} with a perturbation satisfying the conditions $h_i = h$, $\forall i \neq l$, and $h_l =\tilde{h}$. So, if we restrict the equations to
        \begin{equation}\label{eq:xk_plane}
            \{ (x,k) \in \mathbb{R}^2 \ | \ x := \tilde{x}_1 = \dots = \tilde{x}_{n-1}  \}
        \end{equation}
        we obtain
        \begin{equation}\label{eq:plane_syst}
            \begin{aligned}
                \dot x & = - \left[ f(x) - f(k - (n-1) x) \right] + \epsilon g(x, k , \Lambda)       \\
                \dot k & = \epsilon  (n-1)  g(x, k , \Lambda) + \epsilon \tilde{g}(x, k , \Lambda) ,
            \end{aligned}
        \end{equation}
        where $g(x, k , \Lambda) := h(x, \dots, k - (n-1) x, \dots, x, \Lambda)$, and $\tilde{g}(x, k , \Lambda) := \tilde{h}(x, \dots, k - (n-1) x, \dots, x, \Lambda) $. Then $\fix(\mathfrak{S}_{n-1})$ is an invariant space.
    \end{proof}

Assuming a perturbation satisfying Lemma \ref{lm:pert_assumption}, we are allowed to continue our analysis on the invariant space \eqref{eq:xk_plane}, which we will refer to as $(x,k)$-plane.
We remark once again that, in a neighborhood of a stable region of
$\fix(\mathfrak{S}_{n-1})$, the reduction is robust; while in unstable regions
the reduction is preserved only by fine tuning of initial conditions and
perturbation.

\subsubsection{Genericity of transcritical intersections}\label{sec:plynomial_response_function}

The analysis to be performed in this section is local. Thus, let us now consider that the response function can be written as
\begin{equation}\label{eq:poly_resp}
    f(x) = \sum_{j=1}^N a_j x^j +\mathcal O(x^{N+1}),
\end{equation}
where $a_j$ are real constant coefficients, $N \in \mathbb{N}$, and we assume that at least one of the coefficients $a_j$, $j\neq1$, is different from zero. We know that the consensus space, for an ALF, does not depend on the particular choice of the response function. On the other hand, the remaining equilibrium sets, that together with $C$ give rise to the full critical manifold, depend on the form of the response function. By detailing the local form of $f$, we are able to further describe the {local shape of the other branches of the critical manifold} crossing the consensus space. We focus our analysis on a neighborhood of the consensus space.
The following proposition shows that if there is a singularity on the consensus space, implying that another branch of equilibria intersects with the consensus space, then we can derive the linear approximation of such intersecting branches of equilibria.

\begin{Proposition}\label{prop:tangents}
    Let the response function $f$  be of the form \eqref{eq:poly_resp} with $N\geq2$. In the $(x,k)$-plane, if a branch of the critical manifold intersects the consensus space, then such a branch has the local form
    \begin{equation}\label{eq:tangents}
        \kappa(x) = 2 x^s_k + (n-2)x ,
    \end{equation}
    where $(x_k^s,\ldots,x_k^s)\in C^s$.
\end{Proposition}

\begin{proof}
    Let us consider the function $  \tilde{\phi}(x,y) := - [f(x) - f(y + x)]$ and the auxiliary system
    \begin{equation}
        \begin{aligned}
            \dot{x} & = \tilde{\phi}(x,y) \\
            \dot y  & = 0 ,
        \end{aligned}
    \end{equation}
    where $y$ acts as a parameter, with $y$ close to zero. We are going to evaluate the slope of the tangents of the curves of zeros of $\tilde{\phi}(x,y)$ intersecting the line $y=0$. Considering
    the response function \eqref{eq:poly_resp}, we expand $f(x + y)$ by using the binomial theorem,
    \begin{align*}
        f(y+x) & = \sum_{j=1}^N a_j (y+x)^j  +\mathcal O((x+y)^{N+1})                                                        \\
               & = \sum_{j=1}^N a_j x^j + \sum_{j=1}^N  \sum_{k=1}^j  \binom{j}{k} a_j y^k x^{j-k} +\mathcal O(x^{N+1})+y\mathcal O(y^ux^v),
    \end{align*}
    where, for brevity, we collect in $\mathcal O(y^ux^v)$ all higher order terms of such order for all combinations $u+v=N$.
    So, we can rewrite $\tilde{\phi}(x,y)$ as
    \begin{equation}
        \tilde{\phi}(x,y) = y\left( \sum_{j=1}^N  \sum_{l=0}^{j-1}  \binom{j}{l+1} a_j y^l x^{j-l-1}+\mathcal O(y^ux^v)\right) ,
    \end{equation}
    where we factored out $y$, and set $l:=k-1$. We regularize $\tilde{\phi}(x,y)$ by removing the line $y=0$ from the set of zeros, i.e. $\tilde{\phi}(x,y)=:y \tilde{\phi}^\textup{reg}(x,y)$, where
    \begin{equation}
        \tilde{\phi}^\textup{reg}(x,y) = \sum_{j=1}^N  \sum_{l=0}^{j-1}  \binom{j}{l+1} a_j y^l x^{j-l-1}+\mathcal O(y^ux^v) .
    \end{equation}
    We continue by disregarding the higher-order terms as they do not play a role in our forthcoming arguments. The regularized auxiliary system reads
    \begin{equation}
        \begin{aligned}
            \dot{x} & = \tilde{\phi}^\textup{reg}(x,y) \\
            \dot y  & = 0 .
        \end{aligned}
    \end{equation}
    Let us compute the Jacobian, $J$, for the regularized auxiliary system. We get
    \begin{equation}
        J = \begin{pmatrix}
            \frac{\partial \tilde{\phi}^\textup{reg} }{\partial x} & \frac{\partial \tilde{\phi}^\textup{reg} }{\partial y} \\ 0 & 0
        \end{pmatrix} ,
    \end{equation}
    where
    \begin{align}
        \frac{\partial \tilde{\phi}^\textup{reg} }{\partial x} & = \sum_{j=1}^N \sum_{l=0}^{j-1} \binom{j}{l+1} a_j (j-l-1) y^l
        x^{j-l-2} ,                                                                                                             \\ \frac{\partial \tilde{\phi}^\textup{reg} }{\partial y} & = \sum_{j=1}^N \sum_{l=0}^{j-1} \binom{j}{l+1} a_j l y^{l-1} x^{j-l-1}
           .
    \end{align}
    Since we are interested in what happens at the intersection with $y=0$, we evaluate the Jacobian on that line, namely
    \begin{align}
        J_0:= J\big|_{y=0} & = \sum_{j=1}^N j(j-1) a_j x^{j-2}
        \begin{pmatrix}
            1 & \frac{1}{2} \\
            0 & 0
        \end{pmatrix} .
    \end{align}
    The eigenvalues of $J_0$ are determined by the equation $\det(J_0 - \mu \id) = 0$, which in our case reads
    \begin{equation}
        - \mu \left[ \sum_{j=1}^N j(j-1) a_j x^{j-2} - \mu \right] =0 ,
    \end{equation}
    and so the eigenvalues are
    \begin{align}
        \mu_1 & = 0  ,                              \\
        \mu_2 & = \sum_{j=1}^N j(j-1) a_j x^{j-2} .
    \end{align}
    Let us notice that, in general, for the regularized auxiliary system the Jacobian $J_0$ cannot be interpreted as the linearisation of the system, because $y=0$ is no more an equilibrium. However, there could be other equilibria on the line $y=0$ at the intersections with the other curves of equilibria. At such points the eigenvector with zero eigenvalue gives the direction of the tangent of the intersecting curve of zeros. The eigenvector $\tilde{v}^{(\mu_1)}=(\tilde{v}^{(\mu_1)}_x , \tilde{v}^{(\mu_1)}_y)^\intercal$ must satisfy the equation $J_0\tilde{v}^{(\mu_1)} = 0$, that is
    \begin{equation*}
        \sum_{j=1}^N j(j-1) a_j x^{j-2} \left(\tilde{v}^{(\mu_1)}_x + \frac{1}{2} \tilde{v}^{(\mu_1)}_y\right) = 0.
    \end{equation*}
    Therefore, since for $j\neq1$ at least one $a_j\neq0$,  $\tilde{v}^{(\mu_1)}_x = - \frac{1}{2} \tilde{v}^{(\mu_1)}_y$, and so $\tilde{v}^{(\mu_1)} \propto (1 , -2)^\intercal$, where $\propto$ is the proportionality symbol. Notice that $\tilde{v}^{(\mu_1)}$ does not depend on $x$, which means that wherever the intersection takes place, the tangent will have the direction of $\tilde{v}^{(\mu_1)}$. At this point, we need to obtain the equivalent vector for the system of our interest \eqref{eq:one_dim}. Note that the equation $y=k-nx$ puts in relation the auxiliary system with the layer problem \eqref{eq:one_dim}. Thanks to this relation we can construct the transformation connecting the tangent vectors of the two systems, in particular we have
    \begin{align*}
        \Pi & := \begin{pmatrix}
                     \frac{\partial x(x,k) }{\partial x} & \frac{\partial x(x,k) }{\partial k} \\ \frac{\partial y(x,k) }{\partial x} & \frac{\partial y(x,k) }{\partial k}
                 \end{pmatrix}
        = \begin{pmatrix}
              1  & 0 \\
              -n & 1
          \end{pmatrix} ,
    \end{align*}
    and then the inverse transformation reads
    \begin{equation}
        \Pi^{-1} =
        \begin{pmatrix}
            1 & 0 \\
            n & 1
        \end{pmatrix} .
    \end{equation}
    Thus, the tangents of the critical manifold at the intersections with the consensus space, in the original system, have direction given by $v_{\mu_1} = \Pi^{-1} \tilde{v}_{\mu_1} \propto (1 , n-2)$, which proves that the slope of the tangent in the $(x,k)$-plane is $n-2$. Therefore, the tangent lines have an equation of the form $\kappa(x) = \tilde{\kappa} + (n-2)x $. In order to find $\tilde{\kappa}$, we use the fact that $\kappa(x^s_k)=n x^s_k$, obtaining $\tilde{\kappa}=2 x^s_k$; the statement is proven. A sketch of the result of this proposition is shown in Figure \ref{fig:cascade}.
\end{proof}

\begin{figure}[htbp]
    \centering
    \begin{tikzpicture}
        \node at (0,0){\includegraphics[scale=0.5]{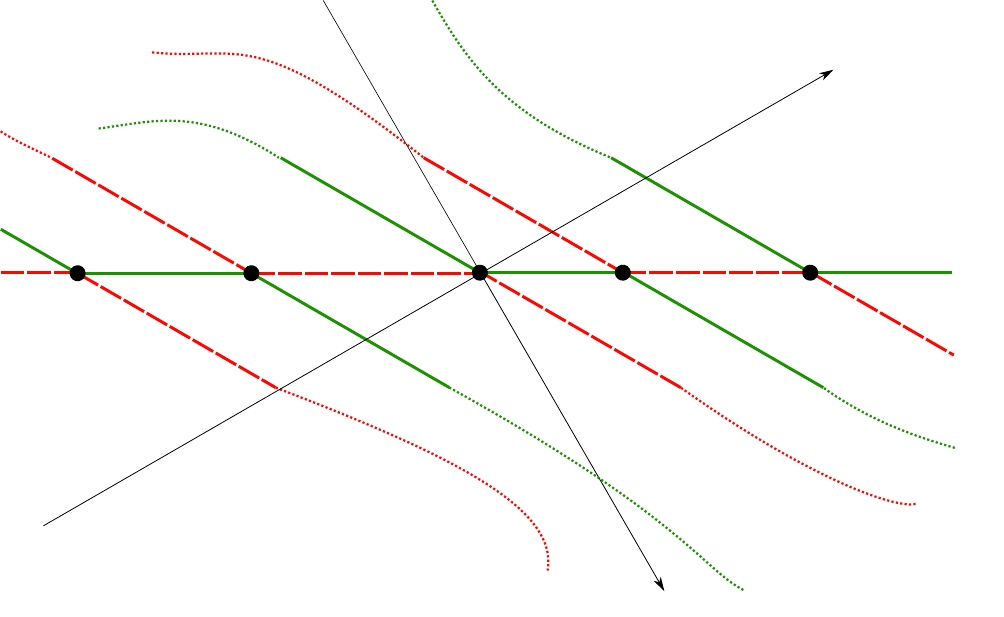}};
        \node at (3,2) {$k$};
        \node at (4.1,0.25) {$C$};
        \node at (1.5,-2.5) {$x$};
    \end{tikzpicture}
    \caption[Local structure of the critical manifold near the consensus space.]{
        Local structure of the critical manifold near the consensus space in the $(x,k)$-plane. In the figure, the $(x,k)$-plane is rigidly rotated in order to visualize the consensus space as a horizontal line. The black dots represent singular points, the colored lines crossing the consensus space at the singular points are other branches of the critical manifold. We draw in dashed red the unstable sections of the critical manifold, while the stable ones are drawn in solid green. We emphasize that, in general, the branches of equilibria that cross the consensus manifold are not straight lines, but due to Proposition \ref{prop:tangents}, the tangents at the singular points are all parallel to each other. }
    \label{fig:cascade}
\end{figure}

Proposition \ref{prop:tangents} tells us that, up to linear approximation, all
other branches of equilibria of the critical manifold crossing the consensus
space are parallel to each other and are in fact transverse to the consensus
space. Moreover, for $n>2$ such branches of the critical manifold locally are also transverse to the fast-flow. As a consequence, we have that
pitchfork singularities are not possible on the consensus space for the class
of ALFs under examination, this statement is also confirmed by checking the
conditions for pitchfork singularities \cite{Krupa_2001}. By a similar
reasoning, fold singularities on the consensus space are also excluded.
Finally, Hopf singularities do not appear either by the fact that the layer
system has always real eigenvalues. Thus, one can conclude that for $K_n$-ALFs
transcritical singularities are generic on the consensus space, in the sense of
codimension one bifurcations; while on other branches of the critical manifold
it is possible to have other singularities, see some examples in Section
\ref{sec:ex_and_sim}.

\begin{Remark}
    Although in this section we have only considered complete networks, we conjecture that other topologies also present, generically, transcritical singularities along the consensus space. The main argument is that, no matter the network structure, as long as the consensus space exists, it is a \emph{one dimensional linear subspace} of the phase-space. Singularities of this linear subspace appear as intersections with other branches of the critical manifold induced by the nonlinear response function $F$. Locally, and generically, one would expect that these intersections are transversal, leading to transcritical singularities (possibly in higher dimensions). In particular, recalling the result of Proposition \ref{prop:invariance}, we know that the symmetries induced by the invariance of the response function generates linear branches of the critical manifold intersecting transversely the consensus set. The great advantage of considering complete networks is the reduction to a planar problem. This reduction requires extra work for other network topologies. Nevertheless, our conjecture will be supported by numerical simulations in Section \ref{sec:ex_and_sim}.
\end{Remark}

\subsubsection{Nondegenerate transcritical singularities}
We now state the conditions for (nondegenerate) transcritical singularities.

\begin{Proposition}\label{prop:trans}    
        Let $\bm{x}^s_k \in C^s$ be a singular consensus point. Let $f$ and $h_i$, $i=1, \dots, n$, be $\mathcal{C}^m$-functions with $m \geq 3$, and assume that Lemma \ref{lm:pert_assumption} holds. If
        \begin{align}
            \textup{d}_x^2  f(x^s_k)                             & \neq 0 , \\
            n                                                    & \neq 2 , \\
            \langle \mathbf{1} , H(\bm{x}^s_k , \Lambda) \rangle & \neq 0 ,
        \end{align}
        then $\bm{x}^s_k$ is a nondegenerate transcritical singularity of \eqref{eq:plane_syst}.   
\end{Proposition}

\begin{proof}
    We check the conditions for transcritical singularities on planar systems provided by Krupa and Szmolyan \cite{Krupa_2001}. Given a planar system of the form \eqref{eq:standard_fast}
    then the origin is a transcritical singularity if
    \begin{align}
        f(0,0,0)                                             & = 0 , \label{al:a}    \\
        \partial_x f(0,0,0)                                  & = 0 , \label{al:b}    \\
        \partial_y f(0,0,0)                                  & = 0 , \label{al:c}    \\
        \begin{vmatrix}
            \partial^2_x f(0,0,0)    & \partial^2_{xy} f(0,0,0) \\
            \partial^2_{yx} f(0,0,0) & \partial^2_{y} f(0,0,0)
        \end{vmatrix} & < 0 , \label{al:d}                          \\
        \partial^2_x f(0,0,0)                                & \neq 0 , \label{al:e} \\ g_0:=g(0,0,0) & \neq 0 .
           \label{al:f}
    \end{align}
    We now prove that these conditions applied to our system lead to the conditions of the statement. Conditions Eqs \eqref{al:a}--\eqref{al:c} are satisfied because $x^s_k$ is a singular consensus point. For the non-degeneracy condition \eqref{al:d}, we obtain
    \begin{equation}
        \textup{d}_x^2  f(x^s_k) \neq 0 .
    \end{equation}
    The transversality condition \eqref{al:e} gives,
    \begin{equation}
        n (n-2)  \textup{d}_x^2  f(x^s_k) \neq 0 ,
    \end{equation}
    which implies $n \neq 2$. Finally, the last condition \eqref{al:f} leads to $\langle \mathbf{1} , H(\bm{x}^s_k , \Lambda) \rangle \neq 0$.
\end{proof}

We consider the case where all the singularities on the consensus space are
nondegenerate transcritical, i.e., the conditions of Proposition
\ref{prop:trans} are met on $C^s$. So, in our current setting, the consensus
space consists of a cascade of transcritical singularities changing the
stability from attracting to repelling, and vice-versa. If at $x^s_k$ the
stability transition, in the direction of the slow flow, is from
attracting to repelling, we call $x^s_k$ a \emph{type-1 transcritical point}.
Type-1 transcritical points are characterized by the sign ratio
\begin{equation}
    \rho :=  \frac{\sgn\left( \textup{d}_x^2 f(x^s_k)\right)}{\sgn \left( \langle \mathbf{1} , H(\bm{x}^s_k , \Lambda) \rangle \right)} = -1 .
\end{equation}
If at $x^s_k$ the stability transition, in the direction of the slow flow, is from repelling to attracting, we call $x^s_k$ a \emph{type-2 transcritical point}. Type-2 transcritical points are characterized by the sign ratio $\rho = 1$.

Under perturbations the consensus space, being a component of the critical manifold, extends to a slow manifold as predicted by Fenichel, Theorem \ref{thm:fenichel}. More specifically, compact normally hyperbolic submanifolds of the consensus perturb to slow manifolds with the same stability properties. The continuation of such slow manifolds close to a singularity depends on the perturbation. Suppose we follow a trajectory starting close to an attracting section of the consensus space. When such trajectory reaches a neighborhood of a singularity its behavior depends specifically on the perturbation and on the geometry of the aforementioned slow manifolds. In particular, a trajectory reaching a neighborhood of a transcritical point could proceed in three different manners.
\begin{description}
    \item[{Exchange of stability}] {The trajectory exchanges to another branch, which intersects the consensus space transversely, of the critical manifold. In other words, the trajectory does not follow anymore the consensus space, but another stable branch of the critical manifold that results from the nonlinear response function.}

    \item[{Fast escape}] {The trajectory, once traversing a small neighborhood of the singularity, continues along the fast flow.}
    \item[{Bifurcation delay}] {The trajectory continues along an \emph{unstable} slow manifold that is perturbed from a normally hyperbolic repelling submanifold of the consensus space for a $O(1)$ time.}
\end{description}

The third characteristic behavior is called \emph{canard}, and acts as a separating case between the other two {\cite{kuehn2015multiple, wechselberger2020geometric}}.
Usually, the presence of a canard in a planar system is associated to a critical value of the parameters of the system. The description in terms of the parameters of the system has the advantage of enabling the unfolding of the singularity to be studied. Here, however, we adopt a geometric perspective on the problem. Near fold singularities a geometric perspective on canard solutions in the plane has been treated extensively in \cite{de2021canard}. In our case of study we show that the requirement for a canard solution can be stated as an algebraic condition on the perturbation. 

\begin{Proposition}\label{prop:canard}
    Let $\bm{x}^s_k \in C^s$ be a singular consensus point satisfying the conditions of Proposition \ref{prop:trans}. If the perturbation satisfies the property
    \begin{equation}
        H(\bm{x}^s_k,\Lambda) \propto \mathbf{1},
    \end{equation}
    where $\propto$ is the proportionality sign, then for $\epsilon$ sufficiently small, system \eqref{eq:perturbed_LDN_standard} admits canard solutions.
\end{Proposition}
When a perturbation satisfies the condition of Proposition \ref{prop:canard}, we refer to it as \emph{critical} perturbation, and we use the notation $H^\textup{crit}(\bm{x},\Lambda)$.

\begin{proof}
    We use again the results from \cite{Krupa_2001} to check that our statement holds. Considering a system of the form \eqref{eq:standard_fast}, satisfying conditions \eqref{al:a}--\eqref{al:f}, then there exists a parameter $\lambda$ controlling the behavior at the transcritical singularity (which we remind to be at the origin for this benchmark system). The parameter $\lambda$ is given by
    \begin{equation}
        \lambda = \frac{1}{|g_0| \sqrt{\beta^2 - \gamma \alpha }} (\delta \alpha + g_0 \beta) ,
    \end{equation}
    where
    \begin{align*}
        \alpha = \frac{1}{2} \partial^2_x f(0,0,0) , & \qquad \beta = \frac{1}{2} \partial^2_{xy} f(0,0,0) , \\ \gamma = \frac{1}{2} \partial^2_y f(0,0,0) , &\qquad \delta = \partial_\epsilon f(0,0,0) .
    \end{align*}
    So, the parameter $\lambda$ for the system \eqref{eq:plane_syst}, at a transcritical point $x_k^s$, reads
    \begin{equation}
        \lambda = - \rho \frac{h(\bm{x}^s_k, \Lambda) + (n-1) \tilde{h}(\bm{x}^s_k, \Lambda) }{ \tilde{h}(\bm{x}^s_k, \Lambda) + (n-1) h(\bm{x}^s_k, \Lambda) } .
    \end{equation}
    For type-1 transcritical points, we have $\rho =-1$, and the condition for canards is $\lambda = \lambda^\textup{crit} :=1$. So, we obtain the equation
    \begin{equation}
        (n-2)\tilde{h}(\bm{x}^s_k, \Lambda) = (n-2) h(\bm{x}^s_k, \Lambda) .
    \end{equation}
    Since the case $n=2$ is excluded by Proposition \ref{prop:trans}, we have $\tilde{h}(\bm{x}^s_k, \Lambda) = h(\bm{x}^s_k, \Lambda)$. Therefore the critical perturbation $H^\textup{crit}(\bm{x},\Lambda)$ at $\bm{x}^s_k$ reads
    \begin{align*}
        H^\textup{crit}(\bm{x}^s_k,\Lambda) & = (h(\bm{x}^s_k,\Lambda), \dots, h(\bm{x}^s_k,\Lambda), \dots, h(\bm{x}^s_k,\Lambda))^\intercal \\
                                            & =h(\bm{x}^s_k,\Lambda)  \mathbf{1},
    \end{align*}
    which agrees with the statement. In order to complete the proof, we show that for a type-2 transcritical point a critical perturbation admits faux-canards. Let us recall that faux-canards for planar systems are generic, and appear for $\lambda < \lambda^\textup{crit}$ \cite{Krupa_2001}. Using the fact that for a type-2 transcritical point $\rho = 1$, we get $\lambda=-1$. 
\end{proof}

In Figure \ref{fig:blowup} we sketch the blow-up of the transcritical
points under a critical perturbation
\cite{Krupa_2001,10.1007/s10440-014-9994-9,https://doi.org/10.48550/arxiv.1901.01402}.
The blow-up picture makes clear the different behavior of the type-$1/2$
transcritical points and illustrates how the continuation of the consensus
space occurs.

\begin{figure}[htbp]
    \centering
    \def\svgwidth{1\columnwidth}
    \import{./figures/}{blowup.pdf_tex}

    \caption{Sketch of blown-up transcritical points on the consensus space. The picture displays typical blow-up phase portraits for a type-$1$ transcritical point (left), and a type-$2$ transcritical point (right); in both cases, we are considering a critical perturbation. For a detailed exposition on the blow-up phase portraits for transcritical singularities see \cite{Krupa_2001,10.1007/s10440-014-9994-9}. The points $\bar{x}^{s,-/+}_{a/r}$ correspond to the singularity on the consensus space, while the points $\bar{k}^{-/+}_{a/r}$ correspond to the singularity on the branch of the critical manifold crossing the consensus space. The labels $-/+$ indicate if we are below, or above, the transcritical point, and the labels $a/r$ stand for attracting, and repelling, respectively. The attracting curves of equilibria are shown in solid green, while the repelling ones in dashed red. The blue points are entering or exit points depending on the direction of the curves drawn.}
    \label{fig:blowup}
\end{figure}

Note that we can interpret the result of Proposition \ref{prop:canard} as follows: \emph{If the perturbation is tangent to the consensus space at a singularity, then canard solutions exist}. As a matter of fact, assuming a perturbation that is (everywhere) tangent to the consensus space, it is possible to prove a result that holds for \emph{every} network structure.

    \begin{Proposition}\label{prop:canard_generic}
        Given an ALF with arbitrary graph structure $\mathcal{G}$, and  $H(\bm{x},\Lambda)$ a perturbation satisfying $H( \bm{x}^*,\Lambda)  \propto \mathbf{1}$ and $H( \bm{x}^*, \Lambda) \neq 0$, $\forall \bm{x}^* \in C$. Then the consensus space is a trajectory of the perturbed system.
    \end{Proposition}
    \begin{proof}
        From Corollary \ref{cor:consensus} we know that the consensus space is an equilibrium of the unperturbed system; moreover, since $H(\bm{x}^*,\Lambda)  \propto \mathbf{1}$, $\forall \bm{x}^* \in C$, we have that $C$ is an invariant space for the perturbed vector field. So, projecting equation  \eqref{eq:perturbed_LDN} onto $C$ we obtain
        \begin{equation}
            \dot{\bm{x}}^* = \epsilon H(\bm{x}^*,\Lambda) ,
        \end{equation}
        $ \bm{x}^* \in C$. Given the assumption that the perturbation is non-zero on the consensus space, which is a one-dimensional space, we obtain that $C$ is a trajectory of the perturbed system.
    \end{proof}

In essence, Proposition \ref{prop:canard_generic} gives some conditions under which a perturbed equivariant ALF \eqref{eq:perturbed_LDN} exhibits a maximal canard. Let us also notice that Proposition \ref{prop:canard_generic} holds for all simple graphs, and it does not make any particular assumption on the singularities of the consensus space. In other words, paying the price of making more stringent assumptions on the perturbation we can cover the cases where the slow-fast theory would be difficult, or not even possible to apply.


\section{Numerical simulations}\label{sec:ex_and_sim}

In this section, we present several examples of ALFs, exploiting generic and
canard behavior from different perspectives. The simulations we show are performed in Mathematica \cite{math}, where the precision of the numerical simulations can be easily set. We show phase portraits, time
series and spatio-temporal plots highlighting the nonlinear diffusion and drift
mechanism on non-complete graphs. In view of Proposition
\ref{prop:equivalence}, our simulations are restricted to simple graphs but we
emphasize that for weighted graphs one would obtain equivalent results.

\subsection{$K_3$ networks: Generic and canard behavior}

We start by looking at a $K_3$-ALF considering two nonlinear response
functions, one that is even $f(x)=(x-1)^2(x+1)^2$ (Figure
\ref{fig:K3_even}), and one odd $f(x)=(x-1)(x+1)^2$ (Figure
\ref{fig:K3_odd}). The two response functions induce different critical
manifolds, Figures \ref{fig:generic_3nodes_pp} and
\ref{fig:generic_3nodes_f2_pp}. Nevertheless, the typical behavior is similar,
see Figures \ref{fig:generic_3nodes_ts} and
\ref{fig:generic_3nodes_f2_ts}.

It is worth recalling now a few issues with numerical simulations of
transcritical points, see more details in \cite{engel2020extended}. Roughly speaking, canard solutions, as shown in Figure  \ref{fig:reduced_canards}, contract first towards the critical manifold, cross a singularity, and then follow the unstable branch of the critical manifold enough time to compensate the aforementioned contraction before leaving an $\mathcal O(\epsilon)$-neighborhood of the critical manifold. For the particular case of a $K_3$-ALF with response function $f(x)=(x-1)^2(x+1)^2$, one can check that the transverse eigenvalue along the critical manifold is $-\frac{4}{9}k(9-k^2)$ implying that the contraction and expansion rate is symmetric with respect to the origin (this balance is more
precisely given by the entry-exit
relation \cite{engel2020extended,kuehn2015multiple}). Such a time symmetry
should be visible as a spatial symmetry in correspondence with a transcritical
point on the consensus manifold. As one can see in Figure
\ref{fig:reduced_canards}, the symmetry described is not very well reproduced
by the numerical simulation, although the computations are performed with a
working precision of $50$ digits. There are several factors that induce such a
problem. First, we have that the symmetry is exact only in the limit for
$\epsilon \to 0$, so we expect that a numerical simulation will just
approximate such symmetry. The simulation displayed in Figure
\ref{fig:reduced_canards} is performed with $\epsilon=1/10$, which is quite
large to clearly see this effect. However, by lowering $\epsilon$ one would
need to further increase the working precision, leading to problems with the
time performance of the CPU. Since our present work has a theoretical focus, we
do not dive deeper in the numerical analysis of these delicate systems, leaving
the problem of highly accurate simulations for future works.

\begin{figure}[htbp]
	\centering
	\begin{subfigure}[T]{.35\textwidth}
		\centering
		\includegraphics[width=\linewidth]{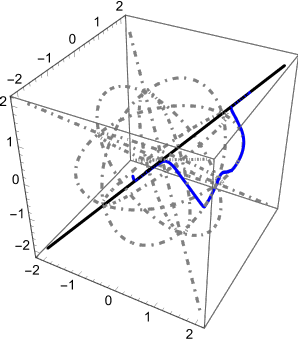}
		\caption{}
		\label{fig:generic_3nodes_pp}
	\end{subfigure}
	\begin{subfigure}[T]{.35\textwidth}
		\vskip 5em
		\includegraphics[width=\linewidth]{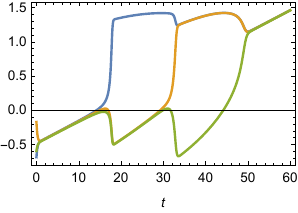}
		\caption{}
		\label{fig:generic_3nodes_ts}
	\end{subfigure}
	\begin{subfigure}[T]{.35\textwidth}
		\centering
		\includegraphics[width=\linewidth]{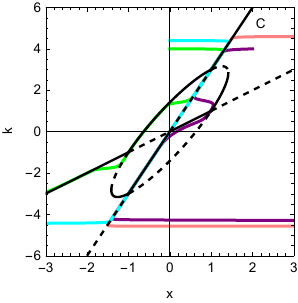}
		\caption{}
		\label{fig:reduced_canards}
	\end{subfigure}
	\begin{subfigure}[T]{.35\textwidth}
		\includegraphics[width=\linewidth]{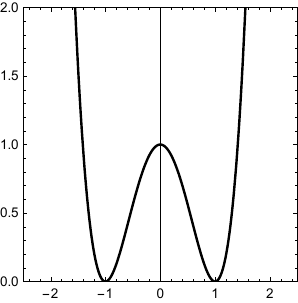}
		\caption{}
		\label{fig:f_resp_one}
	\end{subfigure}
	\caption{$K_3$-ALF with response function $f(x)=(x-1)^2(x+1)^2$ (see Figure \ref{fig:f_resp_one}), and $\epsilon=1/10$. Figures \ref{fig:generic_3nodes_pp} (phase portrait) and \ref{fig:generic_3nodes_ts} (time series) represent the same simulation of the system under a positive random constant perturbation, $ H = (0.484215,0.332579,0.155191)^\intercal$, with random initial conditions in the interval $[-1,0]$ for each variable, $\bm{x}(0)=(-0.686594, -0.171329, -0.595644)^\intercal$. As a consequence of the choice of the perturbation the slow dynamics is pointing upward. In the phase portrait \ref{fig:generic_3nodes_pp} consensus is the solid black line, while the other branches of the critical manifold are the dash-dotted gray curves; the trajectory of the system is drawn in blue.
		Figure \ref{fig:reduced_canards} displays the reduced system and four canard
		solutions, with initial conditions $(x_0,k_0)$ respectively $(2,4)$ (purple),
		$(0,4)$ (green), $(3,46/10)$ (pink), $(0,44/10)$ (cyan). Such solutions are
		obtained with a working precision of $50$ digits. The critical
		perturbation, $H^\textup{crit}=-\mathbf{1}$, is constant and negative, and,
		therefor, the slow dynamics is driving the system to lower values. In
		Figure \ref{fig:reduced_canards} the attracting section of the critical
		manifold, $C$, are represented with solid lines, while the repelling
		ones are dashed.
	}
	\label{fig:K3_even}
\end{figure}
\begin{figure}[htbp]
	\centering
	\begin{subfigure}[T]{.4\textwidth}
		\vskip 4em
		\centering
		\includegraphics[width=\linewidth]{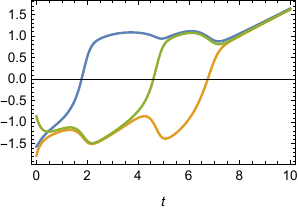}
		\caption{}
		\label{fig:generic_3nodes_f2_ts}
	\end{subfigure}
	\begin{subfigure}[T]{.4\textwidth}
		\includegraphics[width=\linewidth]{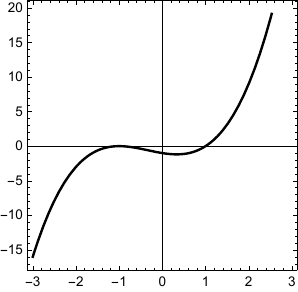}
		\caption{}
		\label{fig:f2_resp}
	\end{subfigure}
	\vskip 1em
	\begin{subfigure}{.4\textwidth}
		\centering
		\includegraphics[width=\linewidth]{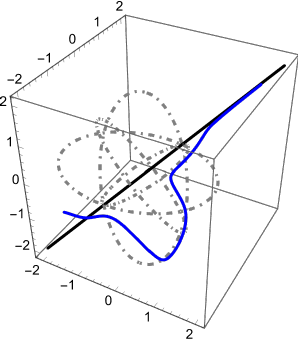}
		\caption{}
		\label{fig:generic_3nodes_f2_pp}
	\end{subfigure}
	\caption{$K_3$-ALF with response function $f(x)=(x-1)(x+1)^2$ (see Figure \ref{fig:f2_resp}), and $\epsilon=1/10$. Figures \ref{fig:generic_3nodes_f2_pp} (phase portrait) and \ref{fig:generic_3nodes_f2_ts} (time series) represent the same simulation of the system under a positive random constant perturbation, $ H=(0.723401,0.0273475,0.156751)^\intercal$, with random initial conditions in the interval $[-2,0]$, $\bm{x}(0)=(-1.56706,-1.77609,-0.866972)^\intercal$. The perturbation induces a drift of the system to higher values, i.e., the slow dynamics is pointing upward. Each of the \enquote{jumps} observed in the time-series correspond to a passage through a singularity (intersection of branches of the critical manifold) in the phase-portrait.
	}
	\label{fig:K3_odd}
\end{figure}
There is another detail to take into account: In Figure
\ref{fig:reduced_canards} we have canard solutions that cross three
transcritical points on the consensus space without leaving the repelling
intermediate section (light blue and pink curves). Here the question is: Where
should we expect the canards to leave the consensus manifold? As we just
discussed, we expect some symmetry, but here we need to be more careful stating
with respect to which transcritical point. We argue that, for these particular
solutions, the intermediate section has a null effect, and, therefore, the
solution should leave the consensus after the third transcritical point
(counting from top to bottom) after a distance (approximately) equal to
the distance traveled close to the attracting section before the first
transcritical point. In order to prove the aforementioned statement, we use the
slow-divergence integral 
 \cite{dumortier2011slow,de2015slow,de2021canard}. So,
let us consider the slow-divergence integral along the consensus manifold for
the system under analysis. Since the perturbation is constant the divergence of
the vector field in the $(x,k)$-plane, evaluated on the consensus space, is
simply $- n \textup{d}_x f (k/n)$. Therefore the slow-divergence integral
becomes
\begin{equation}
    -n \int \textup{d}_x f (k/n) dk.
\end{equation}
We now notice two facts. First, the intermediate section is divided into two spatially equal sections by a transcritical point. Second, on the first subsection $\textup{d}_x f (k/n)>0$, and on the second subsection we have $\textup{d}_x f (k/n)<0$. Therefore the slow-divergence integral on the intermediate section of the consensus space is null, so on the intermediate section the attracting and repelling effect compensate each other.
The considerations we have done hold for the dynamics near consensus under a constant critical perturbation. It is worth mentioning that the nonlocal interaction of transcritical singularities via hysteresis processes could give rise to an enhanced delay for the canard \cite{10.36045/bbms/1228486410,doi:10.1142/S0218127412500265}.


\subsection{Spatio-temporal configurations}\label{sec:spatio-temporal}

In this section, we consider the spatio-temporal configurations induced by
different network structures both under generic and critical perturbations, the
latter giving rise to canards. The simulations are not subject to any reduction, and the evolution of the full system is computed. Given a graph we consider a planar embedding,
and we assume the obtained coordinates as representative of a portion of a
square. The spatio-temporal configurations are then obtained by evaluating a
heat map where the colors, from blue to yellow, are in correspondence with the
state's values of the system. Each frame represents a unit time step in the
sampled interval, and the time flow is represented as follows
\begin{equation}
    \begin{matrix}
        t_1    & t_{n+1} & \cdots \\
        t_2    & t_{n+2} & \cdots \\
        \vdots & \vdots  & \vdots \\
        t_n    & t_{n+n} & \cdots
    \end{matrix} .
\end{equation}

We considered the following network structures: random networks (Figures
\ref{fig:random_generic} and \ref{fig:random_canard}), a complete graph with
$10$ nodes (Figure \ref{fig:complete}), and a lattice with $30 \times 30$
nodes (Figures \ref{fig:lattice_timeseries}--\ref{fig:lattice_30_canard_stp}).
Throughout this section we adopt the response function $f(x)=(x-1)^2(x+1)^2$.

Spatio-temporal patterns are usually considered in the realm of partial
differential equations
\cite{doi:10.1142/9789813239609_0004,doi:10.1137/19M1306610,PAGE200595} where a wide range of equations, e.g., reaction-diffusion systems, can give rise to complex patterns observable in
nature. Here, the continuous space is replaced by a network and consequently in
place of partial differential equations we have a set of ordinary differential
equations. Given the analogy with diffusion and drift mechanisms that perturbed
ALFs undergo it is quite natural to explore the potential spatio-temporal
patterns appearing under different conditions. The most remarkable observation
we can draw from the simulations is that under generic perturbations no
recognizable pattern appear, while under a critical perturbation we observe
transient patterns. In other words, our simulations suggest that \emph{spatio-temporal patterns are induced
    by canards}. Such an effect is related to the delayed loss of stability
produced by a canard. In fact, we generically have that the individual states,
once they reach the singularity, leave consensus one after the other, with no
particular order. On the other hand, under a critical perturbation a
significant amount of states transition at the same time and such a
configuration persists for several units of time. Such phenomenon is more
clearly visible when the number of nodes is large, to see this compare the
plots for a complete graph with $10$ nodes (Figure
\ref{fig:complete_canard_stp}), a random graph with $30$ nodes (Figure
\ref{fig:random_canard_stp}) and a lattice with $900$ nodes (Figure
\ref{fig:lattice_30_canard_stp}).

\begin{figure}[htbp]
	\begin{subfigure}[T]{.5\textwidth}
		\centering
		\includegraphics[width=.8\linewidth]{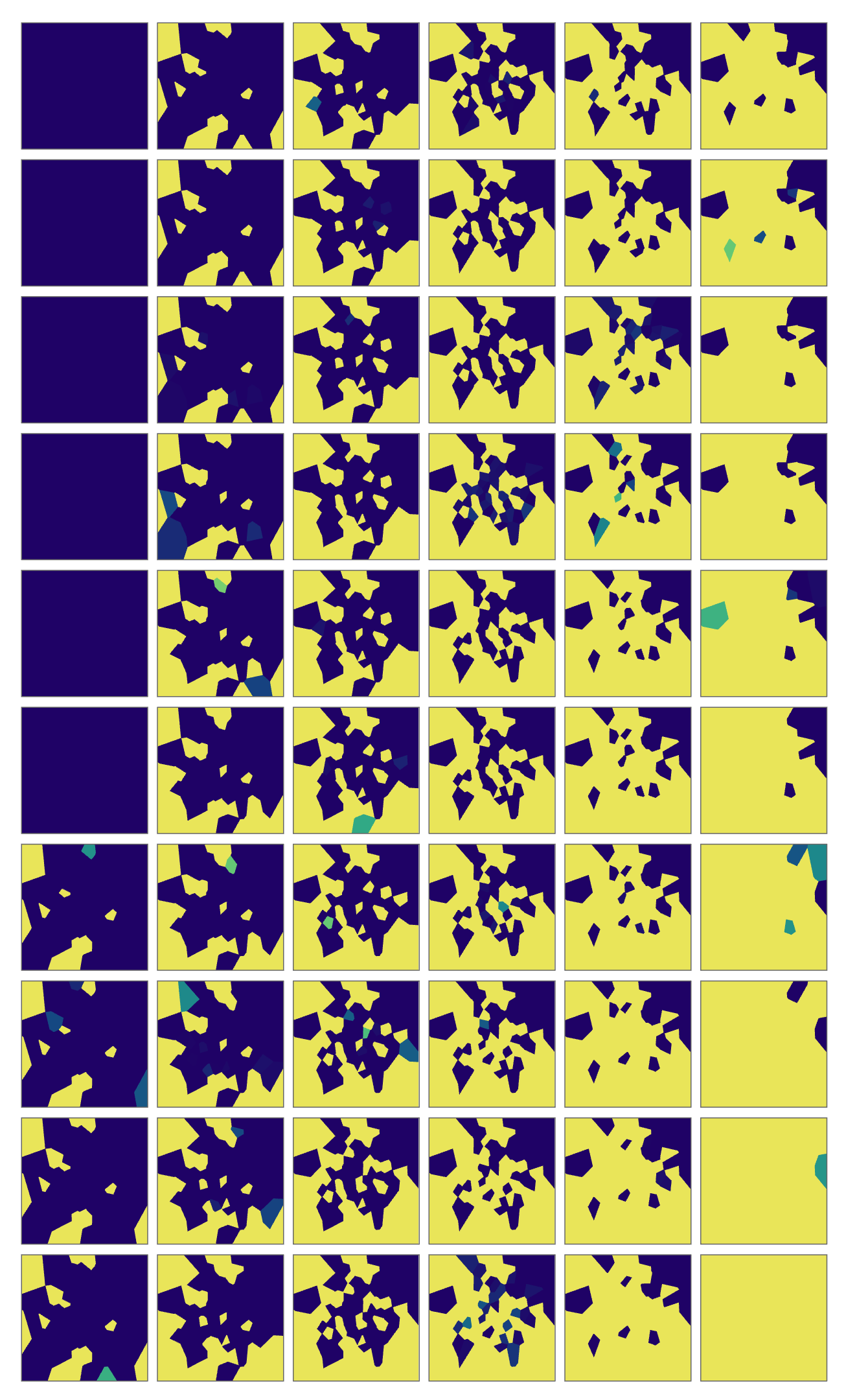}
		\caption{}
		\label{fig:random_stp}
	\end{subfigure}
	\begin{subfigure}[T]{.5\textwidth}
		\centering
		\includegraphics[width=.8\linewidth]{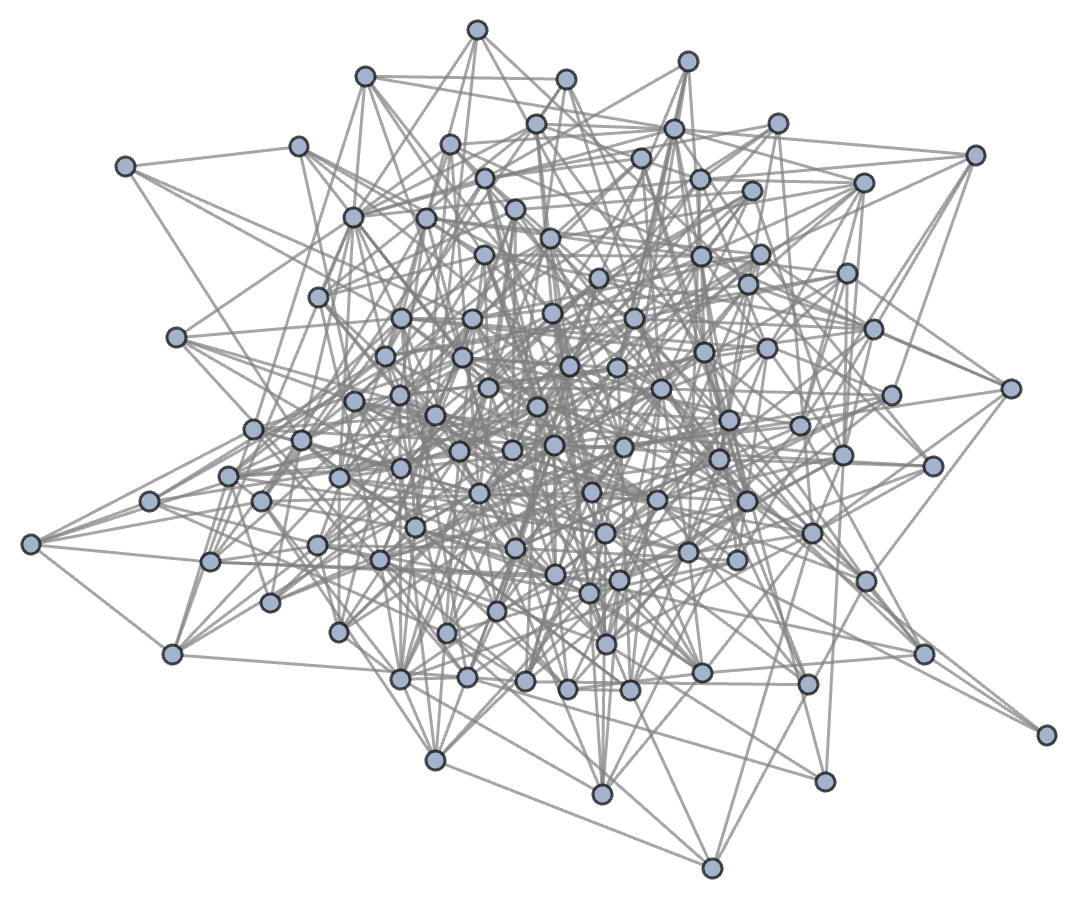}
		\caption{}
		\label{fig:random_graph_generic}
		\centering
		\includegraphics[width=.8\linewidth]{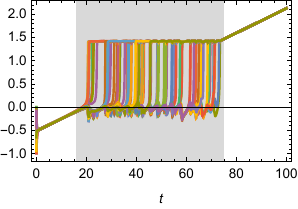}
		\caption{}
		\label{fig:random_ts}
	\end{subfigure}
	\caption{Spatio-temporal configurations (Figure \ref{fig:random_stp}) for a random graph with $100$ nodes and $500$ edges (Figure \ref{fig:random_graph_generic}), for $\epsilon=1/20$. The perturbation is positive, each component is randomly chosen in $[0,1]$ with uniform distribution. The interval of sampling is displayed as the gray region in the time series \ref{fig:random_ts}; the sampling rate is one unit of time. The initial conditions for each variable are randomly chosen in the interval $[-1,0]$ with uniform distribution. The perturbation induces a drift of the system to higher values, such drift is clearly visible in the time series \ref{fig:random_ts}. The heat map range goes from blue (lower values) to yellow (higher values), passing through green.}
	\label{fig:random_generic}
\end{figure}

\begin{figure}[htbp]
	\centering
	\begin{subfigure}[T]{0.35\textwidth}
		\centering
		\includegraphics[width=0.7\linewidth]{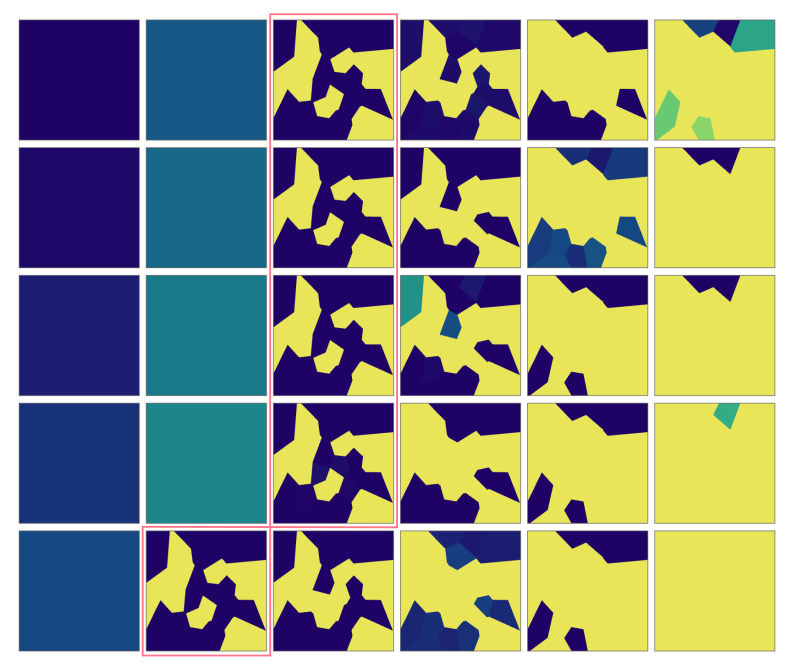}
		\caption{}
		\label{fig:random_canard_stp}
	\end{subfigure}
	\begin{subfigure}[T]{0.35\textwidth}
		\centering
		\includegraphics[width=0.8\linewidth]{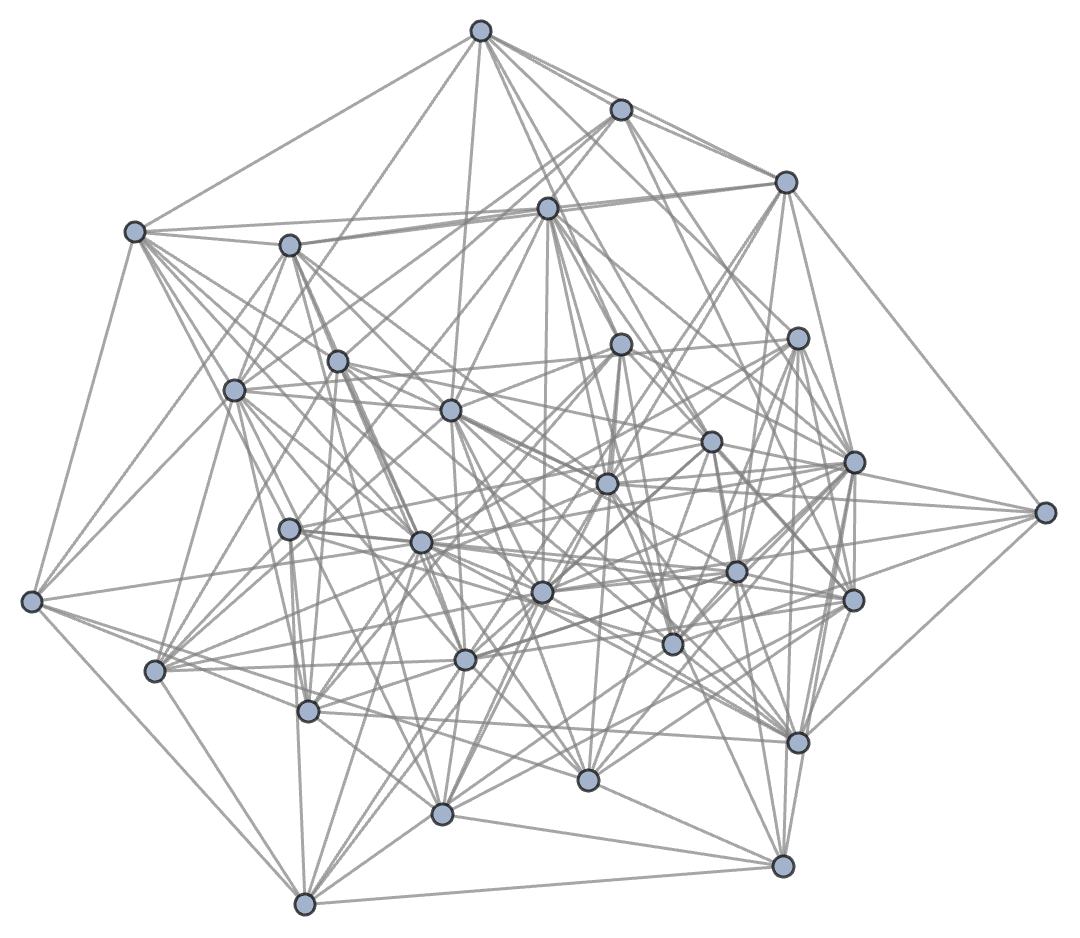}
		\caption{}
		\label{fig:graph_random_canard}
	\end{subfigure}
	\begin{subfigure}[T]{0.35\textwidth}
		\centering
		\includegraphics[width=0.9\linewidth]{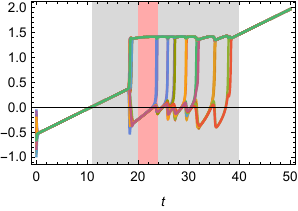}
		\caption{}
		\label{fig:random_canard_ts}
	\end{subfigure}
	\caption{Spatio-temporal configurations (Figure \ref{fig:random_canard_stp}) for a random graph with $30$ nodes and $150$ edges (Figure \ref{fig:graph_random_canard}) under a positive critical perturbation, $H^\textup{crit}=\mathbf{1}$, $\epsilon=1/20$. The interval of sampling is displayed as the gray region in the time series \ref{fig:random_canard_ts}, the sampling rate is one unit of time. In Figure \ref{fig:random_canard_ts} we highlighted in pink the region where a persistent pattern induced by a canard is visible in the maps \ref{fig:random_canard_stp}. The simulation is performed with a working precision of $50$ digits.}
	\label{fig:random_canard}
\end{figure}

\begin{figure}[htbp]
	\centering
	\begin{subfigure}[T]{0.45\textwidth}
		\centering
		\includegraphics[width=0.6\linewidth]{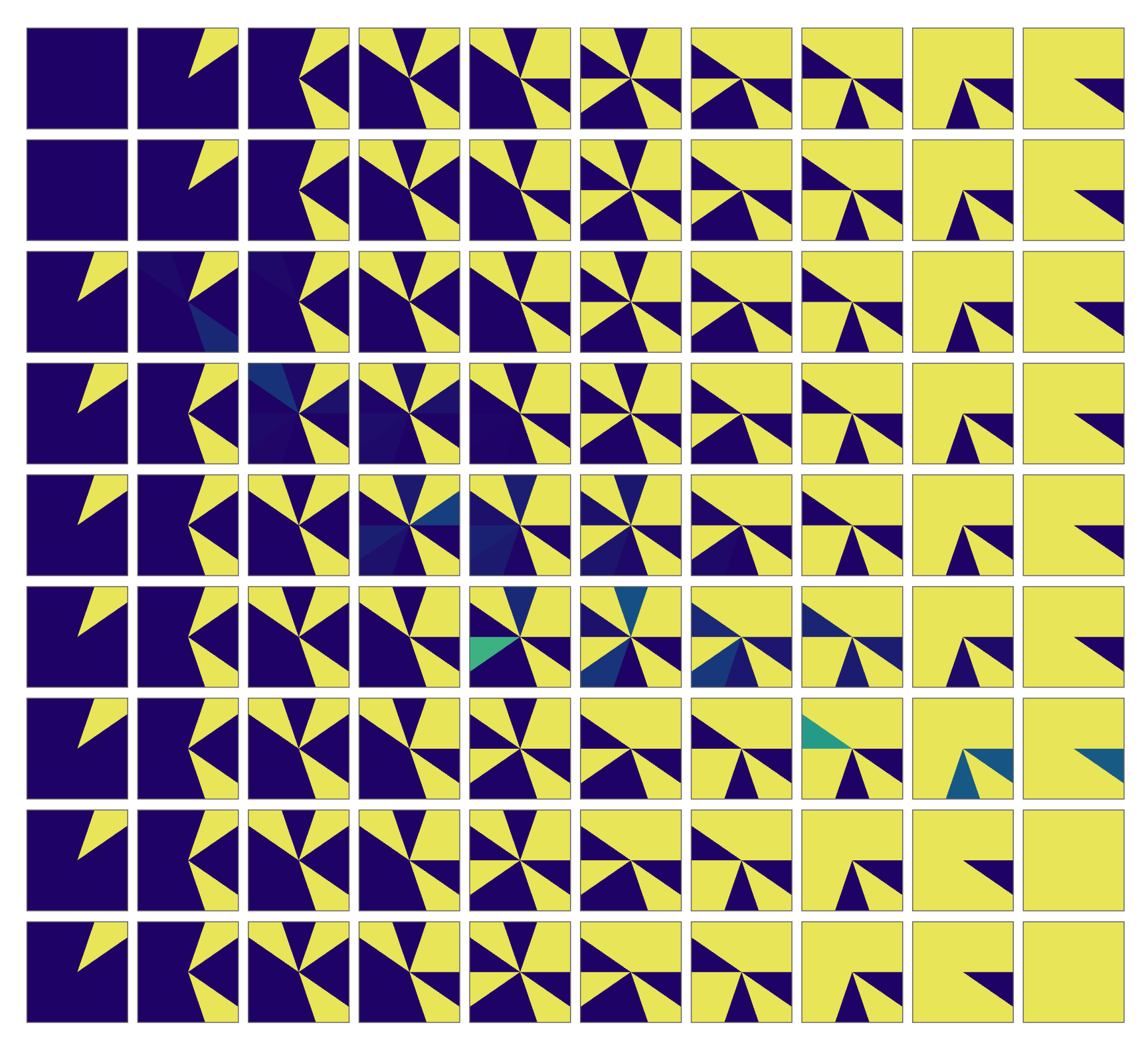}
		\caption{}
		\label{fig:complete_generic_stp}
	\end{subfigure}
	\hspace{0.05\textwidth}
	\begin{subfigure}[T]{0.45\textwidth}
		\centering
		\includegraphics[width=0.75\linewidth]{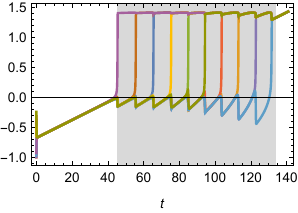}
		\caption{}
		\label{fig:complete_generic_ts}
	\end{subfigure}
	\vskip 0.5cm
	\begin{subfigure}[T]{0.45\textwidth}
		\centering
		\includegraphics[width=0.65\linewidth]{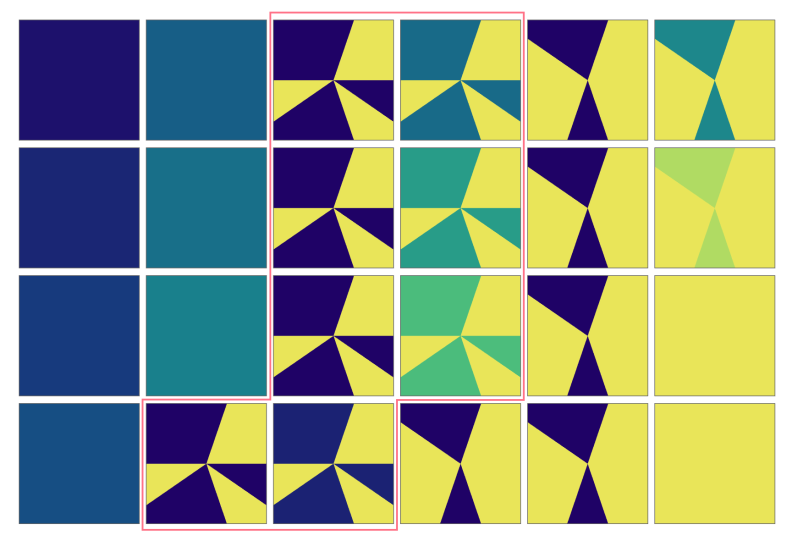}
		\caption{}
		\label{fig:complete_canard_stp}
	\end{subfigure}
	\hspace{0.05\textwidth}
	\begin{subfigure}[T]{0.45\textwidth}
		\centering
		\includegraphics[width=0.75\linewidth]{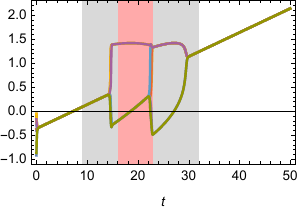}
		\caption{}
		\label{fig:complete_canard_ts}
	\end{subfigure}
	\caption{Spatio-temporal configurations for a complete graph with $10$ nodes for $\epsilon=1/20$. Figure \ref{fig:complete_generic_stp} and \ref{fig:complete_generic_ts} are obtained with a constant perturbation, where each component is randomly chosen in $[0,1]$ with uniform distribution, while Figure \ref{fig:complete_canard_stp} and \ref{fig:complete_canard_ts} are obtained using a positive critical perturbation, $H^\textup{crit}=\mathbf{1}$. The initial conditions for each variable are randomly chosen in the interval $[-1,0]$ with uniform distribution. The interval of sampling is given by the gray area in the time series; the sampling rate is one unit of time. The pink region highlights the persistent pattern induced by the canard. The simulations are performed with a working precision of $50$ digits.}
	\label{fig:complete}
\end{figure}

\begin{figure}[htbp]
	\centering
	\begin{subfigure}[T]{0.4\textwidth}
		\includegraphics[width=\linewidth]{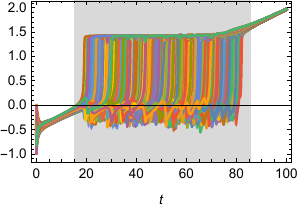}
		\caption{}
		\label{fig:lattice_30_generic_ts}
	\end{subfigure}
	\begin{subfigure}[T]{0.4\textwidth}
		\includegraphics[width=\linewidth]{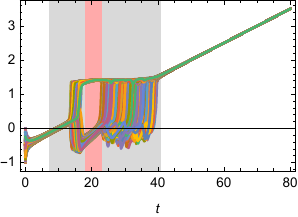}
		\caption{}
		\label{fig:lattice_30_canard_ts}
	\end{subfigure}
	\caption{Time series for a lattice with $30 \times 30$ nodes and $\epsilon=1/20$. For Figure \ref{fig:lattice_30_generic_ts} each component of the perturbation is randomly chosen in $[0,1]$ with uniform distribution, while for Figure \ref{fig:lattice_30_canard_ts} we used a positive critical perturbation, $H^{crit}=\mathbf{1}$. The corresponding spatio-temporal configurations are displayed respectively in Figure \ref{fig:lattice_30_generic_stp} and \ref{fig:lattice_30_canard_stp}. The interval of sampling is given by the gray area in the time series; the sampling rate is one unit of time. The pink region highlights the persistent pattern induced by the canard. The initial conditions for each variable are randomly chosen in the interval $[-1,0]$ with uniform distribution. The simulations are performed with a working precision of $20$ digits.}
	\label{fig:lattice_timeseries}
\end{figure}

\begin{figure}[htbp]
	\centering
	\includegraphics[width=.5\linewidth]{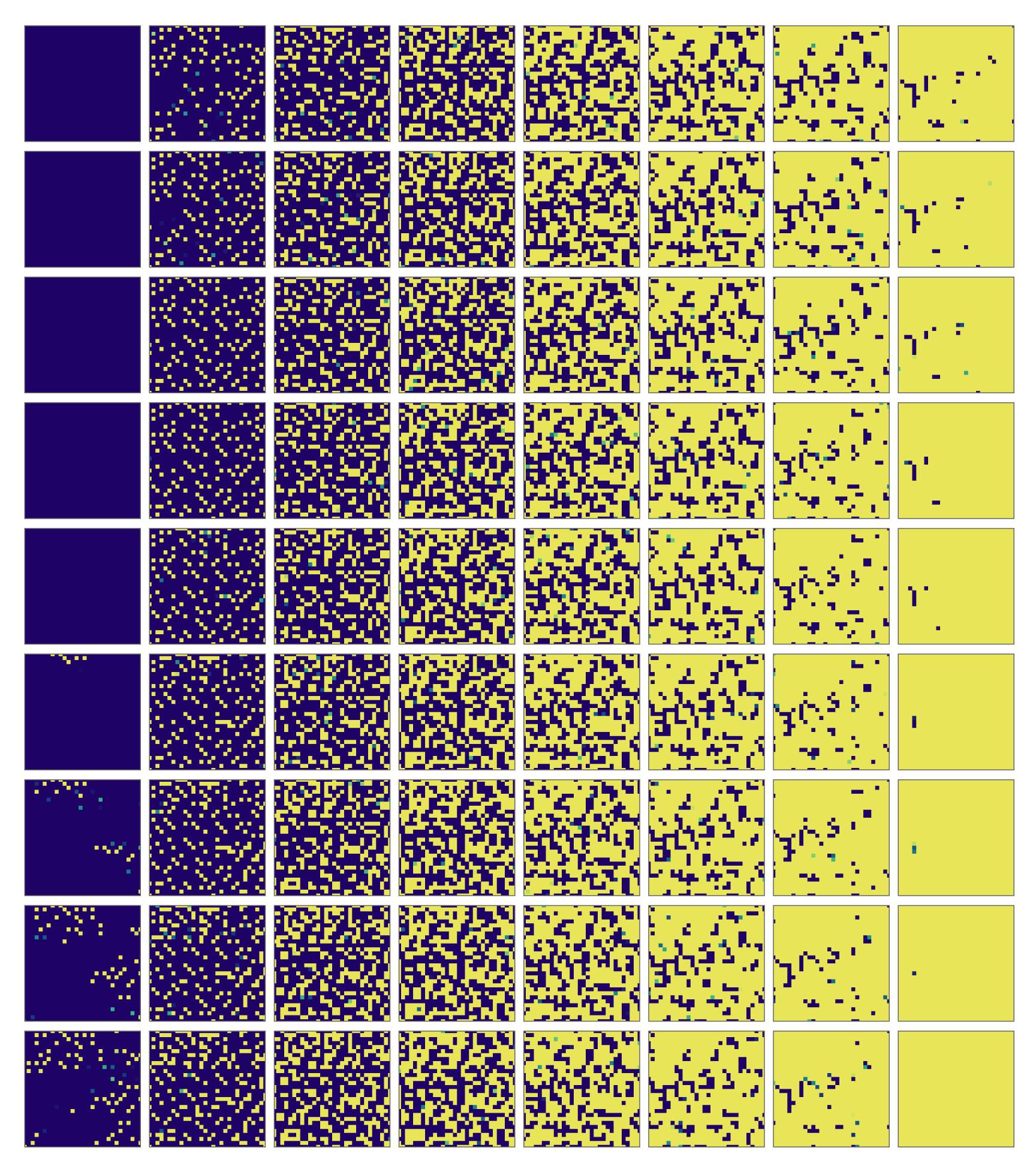}
	\caption{Spatio-temporal configurations for a lattice with $30 \times 30$ nodes under a positive random constant perturbation. The drift process from lower values (blue) to higher values (yellow) occurs by fast transitions, or jumps, due to the intricate nature of the critical manifold, which in turn is induced by the nonlinearity of the response function.}
	\label{fig:lattice_30_generic_stp}
\end{figure}

It is worth mentioning that spatio-temporal patterns are commonly studied and understood for reaction-diffusion equations where the two species interaction could give rise to patterns. Here, we are studying equations that resemble a drift-diffusion, and the (transient) pattern structure arises from the nonlinearity of the diffusion mechanism and the geometry of the drift.

In general, we can notice that the nonlinearity given by the response function
induces a different diffusion mechanism. By looking at the generic cases,
illustrated in Figures \ref{fig:random_stp},
\ref{fig:complete_generic_stp}, \ref{fig:lattice_30_generic_stp}, we see that
the transition between the initial consensus state to the final one exhibits
jumps instead of a homogeneous process. Such an effect is due to the fact that
the nonlinearity induces an intricate structure of the critical manifold,
as we have already seen in the previous examples. We can partially observe the
homogeneous process of diffusion when the canard conditions apply, see
Figures \ref{fig:random_canard_stp}, \ref{fig:complete_canard_stp} (for
the lattice in Figure \ref{fig:lattice_30_canard_stp} such an effect is suppressed
by the image quality compression).

As a final remark, we observe that the transition time between two consensus
sections is visibly shorter in the canard cases.

\begin{figure}[htbp]
    \centering
    \includegraphics[width=.5\linewidth]{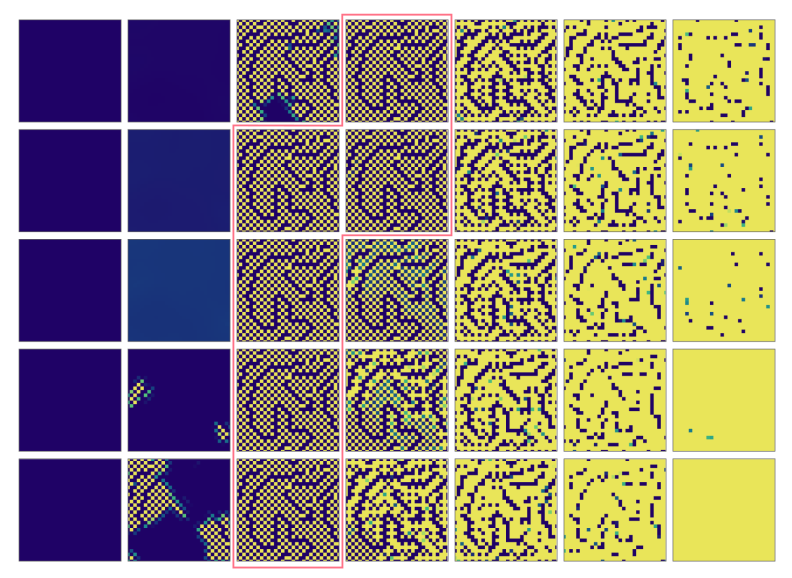}
    \caption{Spatio-temporal configurations for a lattice with $30 \times 30$ nodes under a positive critical perturbation. In this case, it is particularly evident the formation of a pattern, persisting several units of time, induced by the delayed loss of stability caused by the canard.}
    \label{fig:lattice_30_canard_stp}
\end{figure}



\section{Conclusions and discussion }\label{sec:conclusions}

This paper has been dedicated to the analysis of a class of dynamical
    systems describing nonlinear diffusion on a simple graph structure. The
analysis starts by looking at (unperturbed) ALFs on graphs with positive
weights. We emphasize the importance of the fiber-wise topological
equivalence proved in Proposition \ref{prop:equivalence}. Indeed, a
weighted graph, in general, loses the symmetry properties that its own
unweighted counterpart has. The fact that a positive-weighted graph induces
an evolution of the system that is topologically equivalent to the
one induced by the unweighted counterpart, allows us to consider unweighted
simple graphs. An ALF with a simple graph structure inherits the
symmetry properties of the underlying graph, enabling us to use the tools
from equivariant dynamical systems theory, such as fixed-point spaces and
    invariant properties of the response function. The structure of the
    equations allows for a detailed analysis of the equilibria, that in turn led us
    to study the local structure of singularities on the consensus space for
    complete graphs (Propositions \ref{prop:tangents} and \ref{prop:canard}), and
    to prove the existence of a maximal canard for generic simple graphs
    (Proposition \ref{prop:canard_generic}).

Several new directions arise from the results of our paper. For example, one
could consider response functions $f(\bm{x})$ depending also on
\enquote{neighboring} nodes; in such a case the network structure also appears
inside the response vector field, and not only through the Laplacian matrix.
Moreover, one may be interested in studying the case of heterogeneous response
functions, i.e., different nodes might contribute differently to the mutual
interaction. Regarding the graph structure, one may take into account negative
weights or directed graphs, which, even for the linear case, give rise to much richer dynamics than
the positive-weights case. Hypergraphs are also becoming more and more relevant. Incorporating a hypergraph structure in our model could take different directions. The simplest choice would be to study which class of hypergraph models can be associated with an ALF. Of course, such a choice would lead unavoidably to a restricted family of hypergraphs. On the other hand, one could extend the model to hypergraphs, by replacing the graph Laplacian with the hypergraph Laplacian, and the other quantities accordingly. In this case, the system will differ dramatically from the one we studied. However, our analysis could serve as a guideline: Exploiting symmetries and algebraic properties could lead to relevant simplifications for the study.
In a similar line of thought, reduction techniques similar to those presented here, but
for other network topologies than complete networks, seems worth pursuing.
Another possibility is to study Laplacian systems where the states have
dimensions larger than one, for example, one might consider Laplacian systems
where the states evolve in the real plane, or even in a two-dimensional
manifold. The dynamics of planar systems entail interesting phenomena, such as
periodic orbits, that could potentially give rise to new phenomena unseen in
the one-dimensional framework. For similar reasons, states of dimension three
might be relevant as well, since it is well-known that it is for
three-dimensional systems that chaos, at the level of the nodes, may appear. In
all the aforementioned cases, a natural question is to investigate, and
classify the possible singularities that arise and the influence of the
network on them (for example, in this paper, we showed that complete ALFs can
only have transcritical singularities along the consensus manifold). Moreover,
one would then be interested in the behavior under small perturbations, which,
as seen here, is also closely related to the network structure. Finally, given its relevance for network dynamics, it would be interesting to further investigate which perturbations lead to persistence, or on the other hand destroy, the singular canards of Proposition \ref{prop:canard}. One possibility is already given in Proposition \ref{prop:canard_generic}, but its generalization seems rather promising.


\vspace{6pt}
\appendix


\section{Groups}\label{app:sym}
We recall some basic notions about group theory
\cite{rotman1999introduction,harris1991representation}.

\begin{Definition}
    Let $\Gamma$ be a set and $\star: \Gamma \times \Gamma \to \Gamma$ a binary operation. A \emph{group} is a couple $(\Gamma, \star)$ satisfying the following properties
    \begin{enumerate}
        \item Associativity: $(\gamma_i \star \gamma_j) \star \gamma_k = \gamma_i \star
                  (\gamma_j \star \gamma_k)$, $\forall \gamma_i, \gamma_j, \gamma_k \in \Gamma$.
        \item Identity: $\exists \ e \in \Gamma$ such that $e \star \gamma = \gamma \star e =
                  \gamma$, $\forall \gamma \in \Gamma$.
        \item Inverse: $\forall \gamma \in \Gamma$, $\exists \ \gamma^{-1} \in \Gamma$ such
              that $\gamma^{-1} \star \gamma = \gamma \star \gamma^{-1} = e$.
    \end{enumerate}
\end{Definition}

The operation $\star$ is called group operation. Note that, closure under the
group operation follows from the definition of the operation itself. Unless
strictly necessary, we refer to a group just by the set, e.g., $\Gamma$,
implicitly considering it equipped with a group operation; we also omit the
group operation symbol, e.g., $\gamma_i \gamma_j = \gamma_i \star \gamma_j$.

\begin{Definition}
    A group is said to be \emph{finite} if the set $\Gamma$ has a finite number of elements. The \emph{order} of a finite group, $\textup{Ord}(\Gamma)$, is the number of elements in the group.
\end{Definition}

Given a vector space, a representation, $\psi$, is a homomorphism from the
group to the general linear group of the vector space. Since the states,
$\bm{x}$, of an ALF belong to $\mathbb{R}^n$, we are interested in how
$\Gamma$ acts on $\mathbb{R}^n$, therefore, we consider representations $\psi :
    \Gamma \to GL(\mathbb{R}^n)$. To avoid new unnecessary notation, we omit to
explicitly write the representation. It is implicit that when we write a group
element applied to a vector we are considering the representation on the
appropriate vector space, e.g., $\gamma \bm{x} = \psi(\gamma) \bm{x}$.

\begin{Definition}
    Given two groups $\Gamma^{(1)}$, $\Gamma^{(2)}$, the \emph{direct product} $\Gamma^{(1)} \times \Gamma^{(2)}$ is a group with set of elements given by the Cartesian product
    \begin{equation}
        \Gamma^{(1)} \times \Gamma^{(2)} = \left\{ (\gamma^{(1)}, \gamma^{(2)}) \ | \ \gamma^{(1)} \in \Gamma^{(1)} ,  \gamma^{(2)} \in \Gamma^{(2)} \right\} ,
    \end{equation}
    and operation defined as follows
    \begin{equation}
        (\gamma^{(1)}_i, \gamma^{(2)}_j) (\gamma^{(1)}_k, \gamma^{(2)}_l) = (\gamma^{(1)}_i \gamma^{(1)}_k, \gamma^{(2)}_j \gamma^{(2)}_l) ,
    \end{equation}
    $\forall \gamma_{i}^{(1)}, \gamma_{k}^{(1)} \in \Gamma^{(1)}$ and $\forall \gamma_{j}^{(2)}, \gamma_{l}^{(2)} \in \Gamma^{(2)}$.
\end{Definition}

It follows that the direct product can be extended to an arbitrary number of
groups, $\Gamma^{(1)} \times \Gamma^{(2)} \times \dots \times \Gamma^{(n)}$.
Another important concept is the one of \emph{fixed-point space}, i.e., the set
of points that are kept fixed by the action of the group.

\begin{Definition}\label{def:fix}
    Let $\Sigma \subseteq \Gamma$ be a subgroup of $\Gamma$. The \emph{fixed-point space} (under the action in $\mathbb{R}^n$) of the group $\Sigma$ is denoted by $\fix(\Sigma)$ and it is given by
    \begin{equation}
        \fix(\Sigma) := \left\{ \bm{x} \in \mathbb{R}^n \ | \  \sigma \bm{x}=\bm{x} ,\ \forall \sigma \in \Sigma \right\} .
    \end{equation}
\end{Definition}

A direct interplay between graph theory and group theory is given by the
graph's automorphism group. For graphs the automorphism group is a subgroup of
the (finite dimensional) symmetric group $\mathfrak{S}_n$
\cite{sagan2001symmetric}, i.e., the group of permutations of $n$ symbols.

\begin{Definition}\label{def:aut_g}
    Given a (simple) graph $\mathcal{G}$, the \emph{automorphism group}, $\textup{Aut}(\mathcal{G})$, is the group of permutations which preserves the adjacency structure of the vertices of $\mathcal{G}$.
\end{Definition}

\begin{Remark}
    From an algebraic point of view, preservation of adjacency structure correspond to the fact that $\sigma \in \textup{Aut}(\mathcal{G})$  if and only if $[ A,  \sigma ] =0$, where $[ \cdot ,\cdot ]$ represent the commutator of matrices \cite{biggs1993algebraic}. Notice that the same statement holds also if we substitute the adjacency matrix with the Laplacian matrix.
\end{Remark}

\subsection{Invariance and equivariance}

\begin{Definition}
    Let $\Gamma$ be a group\footnote{Notice that this is not related to $\textup{Aut}(\mathcal{G})$, we use the same symbol $\Gamma$ for different groups in different context.}. A function $f$ is called \emph{$\Gamma$-invariant}  if
    \begin{equation}
        f \circ \Gamma  = f ,
    \end{equation}
    where $\circ$ denotes the composition of maps.
\end{Definition}

The concept of symmetry, in dynamical systems theory, refers to the fact that
there exists a group mapping solutions to solutions. When a vector field is
given, such a concept can be translated in terms of equivariance of the vector
field.

\begin{Definition}
    Let $\Gamma$ be a group, and $X$ a vector field. Then $X$ is called \emph{$\Gamma$-equivariant} if
    \begin{equation}
        X \circ \Gamma = \Gamma \circ X .
    \end{equation}
    An element of the group, $\gamma \in \Gamma$, is called a symmetry of the vector field
\end{Definition}

\begin{Remark}
    It is straightforward to see that if a system is $\Gamma$-equivariant, then $\Gamma$ maps solutions to solutions.
\end{Remark}

A solution of a dynamical system is a simple example of an invariant set under
the flow. Invariant sets, in general, are extremely relevant in the study of
ODEs, since they allow to reduce the analysis to potentially simpler equations.
We recall here the general definition of invariant set under the flow induced
by a vector field. Later we state the interplay between symmetries and
invariant sets.

\begin{Definition}
    Let $\phi$ be the flow generated by a vector field on $\mathbb{R}^n$. A set $\mathcal{M} \subset \mathbb{R}^n$ is said to be invariant (respectively positively/negatively invariant) under the flow if for every $x\in\mathcal M$, $\phi_t(x)\in\mathcal M$, $\forall t \in \mathbb{R}$ (respectively $\forall t \in \mathbb{R}_{+} / \mathbb{R}_{-}$).
\end{Definition}

In order to keep a compact notation, we call a set invariant under the flow a
$\phi$-invariant set. From equivariance of the vector field we can immediately
infer the existence of a $\phi$-invariant set.

\begin{Proposition}\label{prop:invariant_space}
    The fixed-point space, $\fix(\Gamma)$, is a $\phi$-invariant set for $\Gamma$-equivariant systems \cite{golubitsky2012singularities}.
\end{Proposition}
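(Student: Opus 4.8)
The plan is to show that if $\bm{x} \in \fix(\Gamma)$, then the whole forward and backward orbit through $\bm{x}$ stays in $\fix(\Gamma)$, i.e.\ $\phi_t(\bm{x}) \in \fix(\Gamma)$ for all $t$. The natural strategy is to compare two curves: the trajectory $t \mapsto \phi_t(\bm{x})$ and the curve $t \mapsto \gamma \phi_t(\bm{x})$ obtained by applying a fixed symmetry $\gamma \in \Gamma$. Both should solve the same initial value problem, and then uniqueness of solutions does the rest.

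First I would fix an arbitrary $\gamma \in \Gamma$ and set $\bm{y}(t) := \gamma\,\phi_t(\bm{x})$. Differentiating and using linearity of the representation $\psi(\gamma)$ (recall $\gamma \bm{x}$ means $\psi(\gamma)\bm{x}$ with $\psi(\gamma) \in GL(\mathbb{R}^n)$, hence a constant linear map), I get $\dot{\bm{y}}(t) = \gamma\, \dot{\phi}_t(\bm{x}) = \gamma\, X(\phi_t(\bm{x}))$. Now invoke $\Gamma$-equivariance, $X \circ \gamma = \gamma \circ X$, to rewrite this as $\dot{\bm{y}}(t) = X(\gamma\, \phi_t(\bm{x})) = X(\bm{y}(t))$. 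So $\bm{y}(\cdot)$ is also an integral curve of $X$. For the initial condition: since $\bm{x} \in \fix(\Gamma)$ we have $\gamma \bm{x} = \bm{x}$, so $\bm{y}(0) = \gamma \phi_0(\bm{x}) = \gamma \bm{x} = \bm{x} = \phi_0(\bm{x})$. By uniqueness of solutions of the ODE (the vector field is $C^1$, so Picard–Lindelöf applies), $\bm{y}(t) = \phi_t(\bm{x})$ for all $t$ in the maximal interval of existence, i.e.\ $\gamma\, \phi_t(\bm{x}) = \phi_t(\bm{x})$. Since $\gamma \in \Gamma$ was arbitrary, $\phi_t(\bm{x}) \in \fix(\Gamma)$, which is exactly invariance. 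Running the same argument for $t < 0$ (or simply noting the maximal interval is two-sided) gives full (two-sided) invariance rather than merely positive invariance.

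The argument is essentially routine once the setup is in place, so I do not anticipate a serious obstacle; the only points requiring a little care are (i) making explicit that the group acts linearly so that differentiation commutes with $\gamma$, and (ii) handling the case where solutions are not globally defined — one should phrase invariance relative to the maximal interval of existence, or note that $\fix(\Gamma)$ being a linear subspace is closed, so no escape issues arise beyond the usual ones. One could alternatively cite \cite{golubitsky2012singularities} directly, as the proposition statement already does, and present the above as a short self-contained justification. I would include the three-line computation for completeness and remark that the same reasoning shows $\fix(\Sigma)$ is $\phi$-invariant for any subgroup $\Sigma \subseteq \Gamma$, since a $\Gamma$-equivariant vector field is in particular $\Sigma$-equivariant.
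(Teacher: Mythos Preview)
Your proof is correct and is the standard argument. Note, however, that the paper does not actually give its own proof of this proposition: it simply states the result with a citation to \cite{golubitsky2012singularities} and moves on. Your self-contained justification via uniqueness of solutions is exactly what one finds in that reference, so there is nothing to compare approaches against; your write-up would serve well as the missing proof.
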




\section{Singular perturbations}\label{app:singular}

In general, the symbols used in this section are not related to those in the
main text.

A singularly perturbed system of ODEs, in the standard form, is also called a
\emph{slow-fast system}, and it has the form
\begin{equation}\label{eq:standard_fast}
    \begin{aligned}
        \dot{x} & = f(x,y,\epsilon)            \\
        \dot{y} & = \epsilon g(x,y,\epsilon) ,
    \end{aligned}
\end{equation}
where $0<\epsilon \ll 1$, $x \in \mathbb{R}^n$, $y \in \mathbb{R}^m$, and the time-parametrisation $t$ is called \emph{fast-time}. Equation \eqref{eq:standard_fast} is said to be in the standard form because the time separation between the fast-variables, $x$, and the slow-variables, $y$, is explicit. In the non-standard form a singularly perturbed system has the form
\begin{equation}
    \dot{z} = F(z, \epsilon) ,
\end{equation}
where $z \in \mathbb{R}^{n+m}$, and the vector field $F(z, 0)$ has at least one set of equilibria of dimension greater than zero \cite{wechselberger2020geometric}.

Defining the \emph{slow-time}, $\tau:= \epsilon t$, equation
\eqref{eq:standard_fast} can be rewritten as
\begin{equation}\label{eq:standard_slow}
    \begin{aligned}
        \epsilon  x' & = f(x,y,\epsilon)    \\
        y'           & =  g(x,y,\epsilon) .
    \end{aligned}
\end{equation}
Setting $\epsilon$ to zero in equation \eqref{eq:standard_slow}, we obtain the
\emph{reduced system}
\begin{equation}\label{eq:standard_reduced}
    \begin{aligned}
        0  & = f(x,y,0)    \\
        y' & =  g(x,y,0) ,
    \end{aligned}
\end{equation}
which is a set of constrained differential equations.
Instead, setting $\epsilon$ to zero in equation \eqref{eq:standard_fast} we obtain the \emph{layer problem}
\begin{equation}\label{eq:standard_layer}
    \begin{aligned}
        \dot{x} & = f(x,y,0) \\
        \dot{y} & = 0 ,
    \end{aligned}
\end{equation}
where the slow-variables, $y$, play the role of parameters.

\begin{Definition}
    The \emph{critical manifold} is defined as the set
    \begin{equation}
        \mathcal{C}_0 = \{ (x,y) \in \mathbb{R}^n \times \mathbb{R}^m \ | \ f(x,y,0) = 0 \}.
    \end{equation}
\end{Definition}

\begin{Remark}
    Regardless of the commonly accepted name, the critical manifold is not necessarily a manifold. 
\end{Remark}
The critical manifold is the constraint space for the reduced system \eqref{eq:standard_reduced}, or equivalently, it is the set of equilibria of the layer problem \eqref{eq:standard_layer}.

\begin{Definition}
    A point $x^* \in \mathcal{C}_0$ is called \emph{normally hyperbolic} if the Jacobian matrix $\textup{D}_x f(x^*,y,0)$, associated to the layer problem \eqref{eq:standard_layer}, has no eigenvalues on the imaginary axis.
\end{Definition}

\begin{Definition}
    A submanifold of the critical manifold $\mathcal{M}_0 \subseteq \mathcal{C}_0$ is called normally hyperbolic if all points in $\mathcal{M}_0$ are normally hyperbolic. Moreover, invariant manifolds that are normally hyperbolic are called normally hyperbolic invariant manifold (NHIM) \cite{hirsch2006invariant}.
\end{Definition}

\begin{Theorem}[Fenichel]\label{thm:fenichel}
    For $\epsilon$ sufficiently small, compact normally hyperbolic submanifolds of the critical manifold, $\mathcal{M}_0 \subseteq \mathcal{C}_0$, persists as $O(\epsilon)$-close locally invariant slow manifolds $\mathcal{M}_\epsilon$ diffeomorphic to $\mathcal{M}_0$. Moreover, $\mathcal{M}_\epsilon$ is normally hyperbolic and has the same stability properties with respect to the fast variables as $\mathcal{M}_0$.
\end{Theorem}

\begin{Remark}
    Theorem \ref{thm:fenichel} is restricted to the results relevant for the present paper. An exhaustive version can be found in \cite{fenichel1979geometric, kuehn2015multiple}. The terminology $O(\epsilon)$-close means that, as $\epsilon \to 0$, $\mathcal{M}_\epsilon$ has Hausdorff distance $O(\epsilon)$ from $\mathcal{M}_0$.
\end{Remark}

For nonhyperbolic points, $x^* \in \mathcal{C}_0$, also known as \emph{singularities}, Fenichel's theorem does not provide information about the perturbed system in a neighborhood of $x^*$. The process of analysis of the system in a neighborhood of a singularity is called desingularisation. We briefly introduce the \emph{blow-up} (desingularisation) for \emph{nilpotent} singularities \cite{https://doi.org/10.48550/arxiv.1901.01402}.

\begin{Definition}
    A point $x^* \in \mathcal{C}_0$ is a \emph{nilpotent singularity} if all the eigenvalues of the Jacobian, at $x^*$, are zero.
\end{Definition}

When a singular point has a mixture of zero eigenvalues, and non-zero (real
part) eigenvalues it is possible to reduce the problem to a nilpotent
singularity via a \emph{center manifold} reduction \cite{carr1979applications}.

\begin{Definition}

    Let $x^* $ be a nilpotent singularity. The \emph{spherical blow-up} is a local transformation
    \begin{equation}\label{eq:blowup}
        \begin{aligned}
            \beta : \mathbb{S}^{n+m}_{x^*}   \times I  & \to    \mathbb{R}^{n+m+1}         \\
              (\bar{x}, \bar{y},  \bar{\epsilon}, r)       & \mapsto  (x, y, \epsilon )  ,
        \end{aligned}
    \end{equation}
    where $\mathbb{S}^{n+m}_{x^*}$ is the $n+m$-dimensional sphere, embedded in $ \mathbb{R}^{n+m+1}$, centred at $x^*$, $(\bar{x}, \bar{y},  \bar{\epsilon}) \in \mathbb{S}^{n+m}_{x^*}$, $I \subseteq \mathbb{R}_{\geq 0}$ is an interval such that $0 \in I$, and $r \in I$.
\end{Definition}

Let us notice that, for $r>0$ the transformation
\eqref{eq:blowup} is a diffeomorphism, while for $r=0$ it is not invertible. Roughly speaking, the
blow-up maps the point $x^*$ into a $(n+m)$-dimensional sphere. The core
idea of the blow-up is to analyze the induced flow on the sphere and deduct the
behavior of the singularity. However, the blow-up transformation by itself
does not desingularise the system. After the blow-up it is necessary to perform
a conformal transformation (time reparametrization) in order to regularize the
system, i.e., retrieve hyperbolicity. We described the spherical blow-up, but
several other blow-up transformations are possible, e.g., quasihomogeneous,
directional, or even to other manifolds, not necessarily spheres
\cite{kuehn2016remark}. The choice of the appropriate blow-up should be done by
considering the regularization, and the feasibility of the analysis. Since this
work is not concerned with a direct application of the blow-up, but rather with
the implementation of results involving the blow-up technique, we presented the
spherical set-up which provides the most clear geometrical visualization.



%
%

\bibliographystyle{spmpsci}      
\bibliography{bibliography.bib}

%
%
\section*{Statements and Declarations}

\begin{description}
    \item[\textbf{Funding}] The authors declare that no funds, grants, or other support were received during the preparation of this manuscript.
    \item[\textbf{Competing Interests}] The authors have no relevant financial or non-financial interests to disclose.
    \item[\textbf{Data Availability}] All datasets are available upon request to the corresponding author.
\end{description}

\end{document}